\newtheorem{Thm}{Theorem}[section]
\newtheorem{Lem}[Thm]{Lemma}
\theoremstyle{remark}
\theoremstyle{definition}
\newtheorem{Rem}[Thm]{Remark}
\newtheorem{Def}[Thm]{Definition}
\newtheorem{Ex}[Thm]{Example}
\numberwithin{equation}{section}
\newcommand\set[1]{\left\{\,#1\,\right\}}		
\newcommand\abs[1]{\left|#1\right|}				
\newcommand\ska[1]{\left\langle#1\right\rangle} 
\newcommand\norm[1]{\left\Vert#1\right\Vert}	
\DeclareMathOperator{\range}{range}				
\DeclareMathOperator{\dist}{dist}				
\DeclareMathOperator{\id}{id}					
\DeclareMathOperator{\spann}{span}				
\DeclareMathOperator{\diag}{diag}				
\DeclareMathOperator{\ord}{ord}					
\def\N{\mathbb{N}}
\def\Z{\mathbb{Z}}
\def\R{\mathbb{R}}
\def\T{\mathbb{T}}
\newcommand{\cC}{{\mathcal C}}
\newcommand{\cF}{{\mathcal F}}
\newcommand{\cH}{{\mathcal H}}
\newcommand{\cL}{{\mathcal L}}
\newcommand{\cM}{{\mathcal M}}
\newcommand{\cO}{{\mathcal O}}
\newcommand{\cU}{{\mathcal U}}
\newcommand{\Hpla}{H_{\R^2}}						
\begin{document}

\title{Periodic solutions for the N-vortex problem via a superposition principle}
\author{Bj\"orn Gebhard}
\date{}
\maketitle

\begin{abstract}
We examine the $N$-vortex problem on general domains $\Omega\subset\mathbb{R}^2$ concerning the existence of nonstationary collision-free periodic solutions. The problem in question is a first order Hamiltonian system of the form 
$$
\Gamma_k\dot{z}_k=J\nabla_{z_k}H(z_1,\ldots,z_N),\quad k=1,\ldots,N,
$$
where $\Gamma_k\in\R\setminus\{0\}$ is the strength of the $k$th vortex at position $z_k(t)\in\Omega$, $J\in\mathbb{R}^{2\times 2}$ is the standard symplectic matrix and 
$$
H(z_1,\ldots,z_N)=-\frac{1}{2\pi}\sum_{\underset{k\neq j}{k,j=1}}^N\Gamma_j\Gamma_k\log|z_k-z_j|-\sum_{k,j=1}^N\Gamma_j\Gamma_k g(z_k,z_j)
$$
with some regular and symmetric, but in general not explicitely known function $g:\Omega\times\Omega\rightarrow \mathbb{R}$. The investigation relies on the idea to superpose a stationary solution of a system of less than $N$ vortices and several clusters of vortices that are close to rigidly rotating configurations of the whole-plane system. We establish general conditions on both, the stationary solution and the configurations, under which multiple $T$-periodic solutions are shown to exist for every $T>0$ small enough. The crucial condition holds in generic bounded domains and is explicitely verified for an example in the unit disc $\Omega=B_1(0)$. In particular we therefore obtain various examples of periodic solutions in $B_1(0)$ that are not rigidly rotating configurations.
\end{abstract}

{\bf MSC 2010:} Primary: 37J45; Secondary: 37N10, 76B47

{\bf Key words:} vortex dynamics; singular first order Hamiltonian systems; periodic solutions; vortex clusters

\section{Introduction and statement of results}\label{sec:intro}
The $N$-vortex problem is a first order Hamiltonian system that describes the motion of $N$ point vortices inside a planar domain $\Omega\subset \R^2$. If $z_k(t)\in\Omega$ denotes the position of the $k$th vortex at time $t$ and $\Gamma_k\in\R\setminus\{0\}$ its strength, the system is given by
\begin{equation}\label{eq:Ham_omega}
\Gamma_k \dot{z}_k=J\nabla_{z_k} H_\Omega(z_1,\ldots,z_N),\quad k=1,\ldots,N,
\end{equation}
where $J\in\R^{2\times 2}$ is the rotation by $-\frac{\pi}{2}$ and the Hamiltonian $H_\Omega$ defined on
$$
\cF_N(\Omega)=\set{(z_1,\ldots,z_N)\in\Omega^N:z_j\neq z_k\text{ for }j\neq k}
$$ reads
$$
H_\Omega(z_1,\ldots,z_N)=-\frac{1}{2\pi}\sum_{\underset{k\neq j}{k,j=1}}^N\Gamma_j\Gamma_k\log\abs{z_k-z_j}-\sum_{k,j=1}^N\Gamma_j\Gamma_k g_\Omega(z_k,z_j).
$$
The function $g_\Omega:\Omega\times\Omega\rightarrow\R$ classically is defined by the requirement that 
$$
G_\Omega(x,y)=-\frac{1}{2\pi}\log\abs{x-y}-g_\Omega(x,y) 
$$
is the Green's function of the Dirichlet Laplacian of $\Omega$ -- or a more general hydrodynamic Green's function -- and thus in almost all cases not explicitely known.

One obtains system \eqref{eq:Ham_omega} with a point vortex ansatz for the 2D Euler equations, see e.g. \cite{flucher_variational_1999,marchioro_mathematical_1994,newton_n-vortex_2001,saffman_vortex_1993}. Depending on the considered case the derivation originally is due to Kirchhoff \cite{kirchhoff_vorlesungen_1876}, Routh \cite{routh_applications_1880} and Lin \cite{lin_motion_1941,lin_motion_1941-1}. The definition and some properties of hydrodynamic Green's functions can be found in \cite{flucher_variational_1999,flucher_vortex_1997} .

Similar Hamiltonian systems, in which $g_\Omega$ in the definition of $H_\Omega$ is replaced by a possibly different regular function, also appear in singular limits of other PDEs like the Ginzburg-Landau-Schr\"odinger (or Gross-Pitaevskii) equation and the Landau-Lifshitz-Gilbert equation, see \cite{jerrard_dynamics_1998,kurzke_ginzburg-landau_2011} and references therein. In fact for our result it is enough that $g:\Omega\times\Omega\rightarrow\R$ is a sufficiently smooth and symmetric function and not necessarily the regular part of the Dirichlet or a hydrodynamic Green's function.

The present paper will address the question of existence of periodic solutions of \eqref{eq:Ham_omega} in an arbitrary domain. In special domains like $\Omega=\R^2$, $\Omega=B_1(0)$ quite a lot of periodic solutions of \eqref{eq:Ham_omega} can be found that rotate as a fixed configuration around a certain point, cf. section \ref{subsec:relative_equilibria}. This is possible because in those cases $g_\Omega$ is explicitely known and invariant with respect to rotations. Besides the fact that the Hamiltonian is in almost all other cases not explicitely known, it is in general unbounded from both sides, not integrable, has singularities and non compact, not metrically complete energy surfaces. These difficulties cause the failure of standard theorems and methods for the existence of periodics. 

However in the past years three types of periodic solutions in almost arbitrary domains could be established. In the first one vortices with possibly different strengths and of arbitrary number are close to a critical point of the so called Robin function $h_\Omega(z)=g_\Omega(z,z)$ and the configuration of vortices looks after rescaling like a rigidly rotating solution of the $N$-vortex system on $\R^2$, see \cite{bartsch_periodic_2016-1,bartsch_global_2016}. In the second type of solutions, shown in \cite{bartsch_periodic_2016-2}, two identical vortices rotate around their center of vorticity while the center itself follows a level line of $h_\Omega$. The third result holds for an arbitrary number of identical vortices, which separated by time shifts follow the same curve close to the boundary of a simply connected bounded domain, \cite{bartsch_periodic_2016}.  The first and the second result can be seen as a superposition of a solution of a $1$-vortex system in the domain and a solution of the $N$-vortex, resp. $2$-vortex problem on the whole plane. Note here that in the case of a single vortex the Hamiltonian $H_\Omega$ is up to a factor given by the Robin function $h_\Omega$, so critical points of $h_\Omega$ are stationary solutions of the $1$-vortex problem and level lines of $h_\Omega$ correspond to periodic solutions of it. 

Here we will generalize the results of \cite{bartsch_periodic_2016-1,bartsch_global_2016} in the following way:
Instead of an equilibrium of the 1-vortex system on $\Omega$, we consider a stationary solution of a system of $m$-vortices with strength $\Gamma^1,\ldots,\Gamma^m$ located at $\alpha^1,\ldots,\alpha^m\in\Omega$. For every vortex $\Gamma^k$, $k=1,\ldots,m$ take now a rigidly rotating configuration $Z^k(t)$ of the whole-plane system consisting of $N_k$ vortices with strengths $\Gamma^k_1,\ldots,\Gamma^k_{N_k}$, such that $\sum_{j=1}^{N_k}\Gamma^k_j\neq 0$. In the case $N_k=1$ a stationary single vortex may also be considered as an admissible configuration. By a change of timescale we may assume that $\sum_{j=1}^{N_k}\Gamma^k_j=\Gamma^k$. We then ask for the existence of periodic solutions of the $\left(\sum_{k=1}^mN_k\right)$-vortex system on $\Omega$, in which the vortices form $m$ clusters $(z^k_1,\ldots,z^k_{N_k})$, $k=1,\ldots,m$ approximately satisfying 
$$
z^k_j(t)\approx\alpha^k+rZ^k_j(t/r^2)
$$ 
with a small parameter $r>0$. So we superpose a stationary solution of the $m$-vortex system on $\Omega$ and several rigidly rotating configurations of the whole-plane system. This is illustrated for a simple case in Figure \ref{plot1}. 

\begin{figure}
\center
\begin{subfigure}[b]{0.25\textwidth}
        \raisebox{-.47\height}{\includegraphics[width=\textwidth]{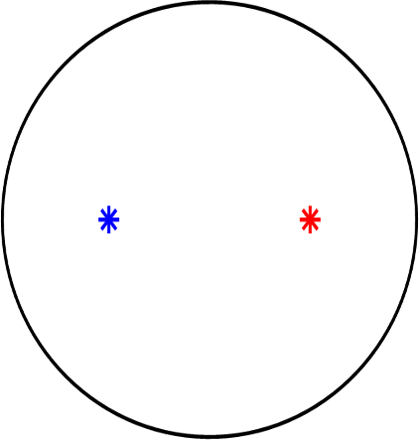}}
    \end{subfigure}
    $\quad+\quad$
    \begin{subfigure}[b]{0.07\textwidth}
        \raisebox{-.42\height}{\includegraphics[width=\textwidth]{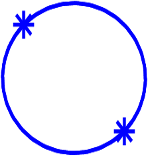}}
    \end{subfigure}
    $\quad+\quad$
    \begin{subfigure}[b]{0.07\textwidth}
        \raisebox{-.42\height}{\includegraphics[width=\textwidth]{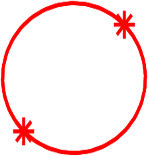}}
    \end{subfigure}
    $\quad=\quad$
    \begin{subfigure}[b]{0.25\textwidth}
        \raisebox{-.47\height}{\includegraphics[width=\textwidth]{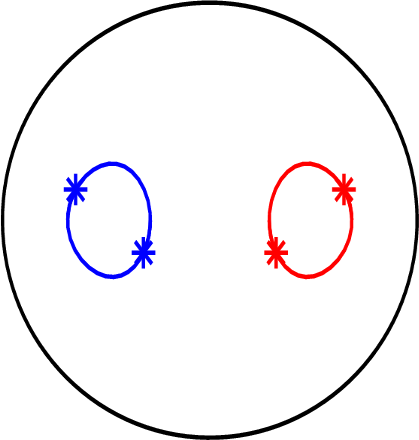}}
    \end{subfigure}
\caption{\small This diagram illustrates the superposition idea. The $2$-vortex problem in the unit disc admits a stationary solution with $\Gamma^1=-\Gamma^2$, cf. Example \ref{Ex:unit_disc}, say $\Gamma^1=-2$ (blue star), $\Gamma^2=2$ (red star). As rigidly rotating configurations on $\R^2$ we take here for simplicity two identical vortices for $\Gamma^1$ and $\Gamma^2$, i.e. $\Gamma^1_1=\Gamma^1_2=-1$ rotate on the blue circle in clockwise direction and $\Gamma^2_1=\Gamma^2_2=1$ rotate on the red circle in counterclockwise direction. The result on the right-hand side is a periodic solution of the $4$-vortex system in the disc with vorticities $\Gamma^1_1,\Gamma^1_2,\Gamma^2_1,\Gamma^2_2$, where each pair of vortices moves along a deformed circle in the same orientation as before. The shown trajectory is the actual numerically computed trajectory of the $4$-vortex problem. Suitable initial conditions can in this case be found due to symmetry considerations.}
\label{plot1}
\end{figure}

The general idea of grouping vortices into different clusters plays a role in establishing the existence of quasi-periodic solutions via KAM theory, see \cite{khanin_quasi-periodic_1982, marchioro_vortex_1984}. 
In this paper we use it to provide general conditions that give rise to families of periodic solutions. The conditions will be verified for a concrete case in the unit disc $\Omega=B_1(0)$ leading to examples of periodic solutions with an arbitrary number of $N\geq 3$ vortices that are not rigidly rotating configurations, one of them is presented in Figure \ref{plot1}.

In the following subsections we will formulate two versions of our theorem and discuss how far the conditions of the theorems hold. Details on the needed ingredients, i.e. stationary solutions of a $m$-vortex system in $\Omega$ and rigidly rotating solutions of the whole-plane system, together with required properties are given in sections \ref{subsec:critical_points_of_cHOmega} and \ref{subsec:relative_equilibria}. After that in section \ref{sec:ansatz} we set up an equation on a Hilbert space that we have to solve in order to get the desired periodic solutions. The equation depends on a parameter $r>0$ which is introduced through a rescaling of the problem. The main part of the  proof of Theorem \ref{thm:2} in section \ref{sec:proof_of_thm2} is to overcome natural degeneracies of the limiting equation when $r\rightarrow 0$. Section \ref{sec:the_case_l_1} contains additional information for a special case of Theorem \ref{thm:2}. Finally we verify in section \ref{sec:example} the conditions for a concrete example in the unit disc.

Bevor we state our results we shortly like to mention the conclusions one can draw from solutions of the $N$-vortex system for the PDEs that give rise to this system as some sort of singular limit. By constructing appropriate stream functions it is possible to desingularize stationary solutions of the $N$-vortex problem to stationary solutions of the 2D Euler equations, see \cite{cao_regularization_2014} and references therein. 
A similar result for the Euler equations and periodic solutions is so far not available. Concerning other PDEs Venkatraman has shown in \cite{venkatraman_periodic_2017} that rigidly rotating solutions of \eqref{eq:Ham_omega} in the unit disc give rise to corresponding periodic solutions of the Gross-Pitaevskii equation. The same is true for rigidly rotating configurations on the sphere $S^2$, see \cite{gelantalis_rotating_2012}. Apart from that the desingularization of general periodic solutions like the ones obtained here is also for the Gross-Pitaevskii equation an open problem. 

\subsection{Statement of results part 1}\label{sec:results1}
Let $\Omega\subset\R^2$ be a domain and fix a symmetric $\cC^2$ function $g:\Omega\times\Omega\rightarrow \R$, for example the regular part of a hydrodynamic Green's function of $\Omega$. We will investigate a point vortex like system similar to \eqref{eq:Ham_omega}, which is induced by the generalized Green's and Robin functions
$$
G(x,y)=-\frac{1}{2\pi}\log\abs{x-y}-g(x,y),\quad\quad h(x)=g(x,x).
$$

At first we consider on the domain $\Omega$ a system of $m\in\N$ vortices with  vorticities $\Gamma^1,\ldots,\Gamma^m\in\R\setminus\{0\}$ and Hamiltonian
$$
\cH(a)=\sum_{\underset{k\neq k'}{k,k'=1}}^m\Gamma^k\Gamma^{k'}G(a^k,a^{k'})-\sum_{k=1}^m\Gamma^k\Gamma^kh(a^k)
$$
defined on $\cF_m(\Omega)=\set{a=(a^1,\ldots,a^m)\in\Omega^m:a^k\neq a^{k'}\text{ for all }k\neq k'}$. We require that the corresponding $m$-vortex system admits a stationary solution, cf. section \ref{subsec:critical_points_of_cHOmega}. To be more precise we assume
\begin{enumerate}[leftmargin=2.5\parindent]
\item[(A1)] $\cH$ has a nondegenerate critical point $\alpha\in\cF_m(\Omega)$.
\end{enumerate}

Next we fix a number $l\in\set{1,\ldots,m}$, which will be the number of vortices that are splitted into configurations consisting of more than a single vortex. Without restriction we take the first $l$ vortices. I.e. for $k=1,\ldots,l$ choose $N_k\geq 2$ vorticities $\Gamma^k_1,\ldots,\Gamma^k_{N_k}\in\R\setminus\{0\}$, such that 
\begin{enumerate}[leftmargin=2.5\parindent]
\item[(A2)] $\sum_{j=1}^{N_k}\Gamma_j^k=\Gamma^k$.
\end{enumerate}
We then define the Hamiltonian $H^k_{\R^2}:\cF_{N_k}(\R^2)\rightarrow\R$,
$$
\Hpla^k(z)=-\frac{1}{2\pi}\sum_{\underset{j\neq j'}{j,j'=1}}^{N_k}\Gamma_j^k\Gamma_{j'}^k\log\abs{z_j-z_{j'}}
$$
inducing the $N_k$-vortex system 
\begin{equation}\label{eq:Ham_N_k_vortex_system}
\Gamma^k_j\dot{z}_j=J\nabla_{z_j} \Hpla^k(z),\quad j=1,\ldots,N_k
\end{equation}
on $\R^2$.

As mentioned in the introduction a $\tilde{N}$-vortex system on $\R^2$ allows rigidly rotating solutions, also called relative equilibria, of the form $Z(t)=e^{\omega J_{\tilde{N}}t}z$, $\omega\neq 0$, cf. section \ref{subsec:relative_equilibria} for examples. Here $J_{\tilde{N}}=\diag(J,J,\ldots,J)\in\R^{2\tilde{N}\times2\tilde{N}}$. Due to scaling $Z(t)\rightarrow \lambda Z(t/\lambda^2)$, $\lambda>0$, we can assume $\omega=\pm 1$. The corresponding $2\pi$-periodic relative equilibrium is called nondegenerate, if the linearized equation
\begin{equation}\label{eq:linearization_on_R2}
\Gamma_j\dot{w}_j=J(\nabla^2\Hpla(Z(t))w)_j,\quad j=1,\ldots,\tilde{N}
\end{equation}
has only 3 linear independent $2\pi$-periodic solutions. This is the minimal possible number due to the invariance under rotations and translations. Our third requirement is:
\begin{enumerate}[leftmargin=2.5\parindent]
\item[(A3)] For $k\in\set{1,\ldots,l}$ there exists a $2\pi$-periodic nondegenerate relative equilibrium solution $Z^k(t)=e^{\pm J_{N_k}t}z^k$ of \eqref{eq:Ham_N_k_vortex_system}.
\end{enumerate}

Note that condition (A2) can always be achieved by a change of time scale provided one has a relative equilibrium solution of \eqref{eq:Ham_N_k_vortex_system} with $\sum_j\Gamma^k_j\neq 0$. 

The remaining $m-l$ vortices --  which may be none -- are not splitted into configurations. I.e. for $k=l+1,\ldots,m$ we let $N_k=1$, $\Gamma^k_1=\Gamma^k$, $H^k_{\R^2}:\R^2\rightarrow\R$, $H^k_{\R^2}\equiv 0$ and $Z^k:\R\rightarrow\R^2$, $Z^k(t)\equiv 0$. 

The system under investigation is the generalized $N:=\sum_{k=1}^{m}N_k$-vortex system
\begin{equation}\label{eq:general_ham_system}
\Gamma_j^k\dot{z}_j^k=J\nabla_{z^k_j}H(z),\quad k=1,\ldots,m,~j=1,\ldots,N_k,
\end{equation}
with Hamiltonian 
$$
H(z)=\sum_{(k,j)\neq(k',j')}\Gamma^k_j\Gamma^{k'}_{j'}G(z^k_j,z^{k'}_{j'})-\sum_{(k,j)}\Gamma^k_j\Gamma^k_jh(z^k_j).
$$
Here $z=(z^1_1,\ldots,z^1_{N_1},\ldots,z^m_1,\ldots,z^m_{N_m})\in\cF_N(\Omega)$ and the indices of the sums run through $\set{(k,j):1\leq k\leq m, 1\leq j\leq N_k}$.
We equivalently write for \eqref{eq:general_ham_system}
$$
M_\Gamma\dot{z}=J_N\nabla H(z)
$$
with $M_\Gamma=\diag\big(\Gamma^1_1,\Gamma^1_1,\ldots,\Gamma^1_{N_1},\Gamma^1_{N_1},\ldots,\Gamma^m_1,\Gamma^m_1,\ldots,\Gamma^m_{N_m},\Gamma^m_{N_m}\big)\in \R^{2N\times 2N}$ and $J_N=\diag\big(J,\ldots,J\big)\in\R^{2N\times 2N}$.

We will use the Sobolev spaces $H^1_{T}=H^1(\R/T\Z,\R^{2N})$, $T>0$ of continuous $T$-periodic functions with square-integrable derivative, equipped with the scalar product
$$
\ska{u,v}_{H^1_T}=\int_0^T\ska{u,v}_{\R^{2N}}\:dt+\int_0^T\ska{\dot{u},\dot{v}}_{\R^{2N}}\:dt
$$
and induced norm $\norm{\cdot}_{H^1_T}$.
For $Z^1,\ldots,Z^m$ as defined before let 
\begin{equation}\label{eq:definition_of_M}
\cM=\set{\left(Z^1(\cdot+\theta_1),\ldots,Z^m(\cdot+\theta_m)\right):\theta_1,\ldots,\theta_m\in\R}\subset H^1_{2\pi},
\end{equation}
which is a $l$-dimensional submanifold, since $Z^{l+1}=\ldots=Z^{m}=0$.
And for $a=(a^1,\ldots,a^m)\in\R^{2m}$ we define
$$
\hat{a}=(a^1,\ldots,a^1,a^2,\ldots,a^2,\ldots,a^m,\ldots,a^m)\in \R^{2N_1}\times\ldots\times\R^{2N_m}=\R^{2N}.
$$
Now we are ready to formulate a first version of our theorem.
\begin{Thm}\label{thm:1}
Under the assumptions (A1)-(A3) there exists $T_0>0$ such that for each $T\in(0,T_0)$ the $N$-vortex type system \eqref{eq:general_ham_system} has $l$ distinct $T$-periodic solutions that are in the following sense close to $\alpha$ and $(Z^1,\ldots,Z^m)$: Let $(z_n)_{n\in\N}$ be a sequence consisting of these periodic solutions with periods $T_n\rightarrow 0$ as $n\rightarrow\infty$, then the $k$th components $[z_n]^k_j$, $j=1,\ldots,N_k$ converge to $\alpha^k$ as $n\rightarrow\infty$, $k=1,\ldots,m$. 
Moreover if we rescale $z_n$, such that 
$$
z_n(t)=r_nu_n\left(\frac{t}{r_n^2}\right)+\hat{\alpha},\quad r_n=\sqrt{\frac{T_n}{2\pi}},\quad u_n\in H^1_{2\pi}, 
$$
then $\dist(u_n,\cM)\rightarrow 0$ with respect to $\norm{\cdot}_{H^1_{2\pi}}$ as $n\rightarrow\infty$.
\end{Thm}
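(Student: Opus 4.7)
My plan is to recast the existence question as a bifurcation problem on $H^1_{2\pi}$ via the rescaling $z(t)=\hat\alpha+ru(t/r^2)$ with $r=\sqrt{T/(2\pi)}$, and then perform a Lyapunov--Schmidt reduction around the $l$-dimensional manifold $\cM$. Substituting the ansatz into \eqref{eq:general_ham_system} and multiplying by $r$, the within-cluster logarithmic part of $G(z_j^k,z_{j'}^k)$ scales as $r^{-1}$ and reconstitutes exactly $J\nabla H^k_{\R^2}(u^k)$, while every other contribution --- cross-cluster Green's terms, the regular part $g$, and the Robin part $h$ --- stays smooth in $(u,r,\alpha)$ and enters at order $r$. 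The rescaled equation thus takes the schematic form
$$
M_\Gamma\dot u(s)=J_N\nabla\Bigl(\sum_{k=1}^m H^k_{\R^2}(u^k)+r\,R(u;r,a)\Bigr),
$$
with $a=(a^1,\ldots,a^m)$ a center parameter allowed to vary near $\alpha$, and $R$ smooth as long as the configuration remains in $\cF_N(\Omega)$.

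At $r=0$ the problem decouples into $m$ independent whole-plane cluster systems, of which only the first $l$ are nontrivial, and the $2\pi$-periodic solutions near $(Z^1,\ldots,Z^m)$ are precisely the family $\cM$. By (A3), the linearization $L_0\colon H^1_{2\pi}\to L^2_{2\pi}$ at any point of $\cM$ has a kernel of dimension $l+2m$: for each of the $l$ nontrivial clusters, the time-shift mode $\dot Z^k$ and the two planar translation modes (three in total, by the minimality condition in (A3)); for each of the remaining $m-l$ clusters, the two-dimensional space of constants. Working on the extended parameter space $(\theta,a)\in T^l\times\Omega^m$, I would decompose $u=(Z^k(\cdot+\theta_k))_k+v$ with $v$ orthogonal to $\Kern L_0$ and apply the implicit function theorem to solve the projection of the full equation orthogonal to $\Kern L_0$ for $v=v(\theta,a;r)=O(r)$.

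The reduced bifurcation equation splits into a $2m$-dimensional spatial-translation part and an $l$-dimensional time-shift part. The leading order of the spatial part, a computation that uses (A2) to collapse the cross-cluster interactions into $\Gamma^k\Gamma^{k'}G(a^k,a^{k'})$, equals a nonzero scalar multiple of $\nabla_a\cH(a)$; by (A1) it is uniquely solvable for $a=a(\theta;r)=\alpha+O(r)$ via another application of the implicit function theorem. What remains is a smooth reduced functional $\Phi_r$ on the torus $T^l$, inheriting its variational structure from the Hamiltonian nature of \eqref{eq:general_ham_system}, whose critical points are in one-to-one correspondence with the sought $T$-periodic solutions. Since $\Phi_r$ is invariant under the diagonal $S^1$ action $\theta_k\mapsto\theta_k+\tau$ reflecting the global time shift of the equation, $S^1$-equivariant Lyusternik--Schnirelmann theory on $T^l$ yields at least $\cat(T^{l-1})=l$ critical $S^1$-orbits, hence $l$ geometrically distinct periodic solutions. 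The convergences $[z_n]^k_j\to\alpha^k$ and $\dist(u_n,\cM)\to 0$ are then automatic from $a(\theta;r)\to\alpha$ and $v\to 0$ in $H^1_{2\pi}$ as $r\to 0$.

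The main obstacle will be the careful analysis of the order-$r$ bifurcation equation for $a$: one must verify that the projection onto the spatial translation modes really assembles into $\nabla\cH(a)$ up to a nontrivial factor (this is where (A2) must be used precisely, including the interplay between logarithmic cross-cluster terms and the Robin contributions), and that the kernel of $L_0$ is exactly as large as predicted by (A3)---neither larger (no spurious zero modes) nor effectively smaller after the reduction. Once this two-step reduction (first $v$, then $a$) is in place, the remaining problem is a textbook critical-point count on a torus.
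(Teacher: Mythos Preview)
Your proposal is correct and follows the same overall architecture as the paper --- rescale via $z=\hat a+ru(\cdot/r^2)$, perform a Lyapunov--Schmidt reduction to a torus, and count critical points by Lusternik--Schnirelmann --- but you handle the $2m$-dimensional translational degeneracy in $\ker L_0$ differently. You let the cluster centers $a\in\Omega^m$ remain free parameters near $\alpha$ and carry out a \emph{two-step} reduction: first solve for the infinite-dimensional complement $v=v(\theta,a;r)=O(r)$ by the implicit function theorem, then divide the $D$-projection of the reduced equation by $r$ and use (A1) to solve the resulting $2m$-dimensional equation $\nabla_a\cH(a)+O(r)=0$ for $a=a(\theta;r)$. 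The paper instead fixes $a=\alpha$ from the outset and absorbs the translation degeneracy directly into a modified map
\[
\psi_r(u)=(\id-P_D)\nabla\Phi_r(u)+\tfrac{1}{r^2}P_{D}\nabla\Phi_r(u),
\]
whose limit $\psi_0(u)=\nabla\Phi_0(u)-P_D\nabla^2 F(0)u$ already has $\cM$ as a \emph{nondegenerate} zero manifold, because the identity $\Gamma^k(\nabla^2F(0)\hat a)^k_j=\Gamma^k_j(\nabla^2\cH(\alpha)a)^k$ converts the $D$-block of $D\psi_0$ into $\nabla^2\cH(\alpha)$. This compresses your two implicit-function steps into one and lands immediately on the $l$-dimensional reduction over $\cM$.

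The trade-off: the paper's $1/r^2$-rescaling is more compact and avoids carrying the auxiliary parameter $a$ through the analysis, but the role of (A1) is somewhat hidden in the verification that $\ker D\psi_0(v)=T_v\cM$. Your route makes the use of (A1) completely transparent --- it is exactly the hypothesis needed to invert the $a$-equation --- and is the more natural setup if one later wants to relax nondegeneracy of $\alpha$ to a topological condition. Either way the endgame is identical: an $S^1$-invariant functional on $\T^l$, hence $\cat(\T^{l-1})=l$ critical orbits, and the asserted convergence statements follow from $a(\theta;r)\to\alpha$ and $v\to 0$. The ``main obstacle'' you flag (that the $D$-projection assembles into $\nabla\cH$) is precisely the content of the paper's Lemma on $\nabla F(0)$ and $\nabla^2F(0)$, so your concern is well placed but not hard to resolve.
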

So roughly speaking we can split vortices of a stationary solution into suitable rigidly rotating configurations and obtain periodic solutions. For fixed $T\in(0,T_0)$ the multiplicity of the $T$-periodic solutions is based on the relative orientation of the $l$ nontrivial configurations to each other. 
Note that the conditions (A1), (A3) are only related to each other in the sense that the vorticities need to add up as stated in (A2). Also the specific relative equilibrium solutions can be choosen independently of each other. Under an additional technical assumption, that couples the critical point of $\cH$ and the relative equilibria $Z^k$, one can improve the multiplicity from $l$ to $2^{l-1}$ $T$-periodic solutions, cf. Remark \ref{rem:A4}. 

Next we will discuss and improve assumptions (A1), (A3) with respect to their applicability to the classical $N$-vortex system \eqref{eq:Ham_omega}. Whenever we provide a function with an index $\Omega$, like $\cH_\Omega$, we refer to the corresponding function induced by the regular part of the Dirichlet Green's function.  
\subsection{\texorpdfstring{Critical points of $\cH_\Omega$}{About critical points of the Hamiltonian}}\label{subsec:critical_points_of_cHOmega}
The search for stationary solutions in general domains itself is not an easy task. 
Of course there is one trivial case: If $m=1$ the $1$-vortex Hamiltonian $\cH_\Omega$ coincides up to a factor with the Robin function $h_\Omega$, which always has a Minimum in bounded domains. 

Concerning more vortices only in the last years some results on the existence of critical points of the $N$-vortex -- in our case $m$-vortex -- Hamiltonian for bounded domains could be achieved, examples include:
\begin{itemize}
\item $m\in\N$, $\Gamma^1=\ldots=\Gamma^m\neq 0$ and $\Omega$ not simply connected \cite{del_pino_singular_2005} or dumbell shaped \cite{esposito_existence_2005},
\item $m\in\set{2,3,4}$, conditions on $\Gamma^k$, e.g. $m=2$ and $\Gamma^1\Gamma^2<0$, $\Omega$ arbitrary \cite{bartsch_critical_2015},
\item $m\in\N$, conditions on $\Gamma^k$ (different from the ones in \cite{bartsch_critical_2015}) for $\Omega$ arbitrary and for $\Omega$ not simply connected \cite{kuhl_equilibria_2016},
\item $m\in\N$, $\Gamma^k=(-1)^{k+1}\Gamma^1$, $\Omega$ symmetric with respect to reflection at a line \cite{bartsch_n-vortex_2010} or the action of a dihedral group \cite{kuhl_symmetric_2015}. 
\end{itemize}
None of the mentioned results addresses the question of nondegeneracy of the critical points, on which our proof relies. Indeed condition (A1) is for these solutions hard to check, since the Hamiltonian $\cH_\Omega$ and the critical point $\alpha$ are not explicitely known. However a recent result of Bartsch, Micheletti and Pistoia shows that $\cH_\Omega$ has only nondegenerate critical points for a generic bounded domain $\Omega$, see \cite{bartsch_morse_????}. So if the vorticities $\Gamma^1,\ldots,\Gamma^m$ allow the existence of a critical point of $\cH_\Omega$, as for example in one of the listed cases, then condition (A1) is satisfied at least after an arbitrarily small deformation of the domain. 
   
In some cases also explicit stationary configurations are known, for example if $\Omega=\R^2$ or $\Omega=B_1(0)$. But these are all degenerate due to the symmetries of the domain, i.e. if $\alpha\in\cF_m(B_1(0))$ is a critical point of $\cH_{B_1(0)}$, then every $e^{\lambda J_m}\alpha$, $\lambda\in\R$ is a critical point as well. Thus $J_m\alpha\in\ker\nabla^2\cH_{B_1(0)}(\alpha)$ and condition (A1) is violated. But we will see that degeneracy induced by symmetries can still be handled, i.e. we may replace assumption (A1) by
\begin{enumerate}[leftmargin=2.5\parindent]
\item[(A1$^\prime$)] $\cH$ has a critical point $\alpha\in\cF_m(\Omega)$ and one of the following properties holds:
\vspace*{-8pt}
\begin{enumerate}[(i)]
\item $\alpha$ is nondegenerate, 
\item $\Omega$ and $g$ are radial \big($e^{\lambda J}\Omega=\Omega$, $g\left(e^{\lambda J}x,e^{\lambda J}y\right)=g(x,y)$ for every $\lambda\in\R$, $x,y\in\Omega$\big) and  $\dim\ker\nabla^2\cH(\alpha)=1$,
\item $\Omega$ and $g$ are in one direction translational invariant \big(there exists $\nu\in\R^2\setminus\{0\}$ with $\lambda\nu+\Omega=\Omega$, $g(x+\lambda\nu,y+\lambda\nu)=g(x,y)$ for every $\lambda\in\R$, $x,y\in\Omega$\big) and  $\dim\ker\nabla^2\cH(\alpha)=1$,
\item $\Omega=\R^2$, $g(x,y)=\tilde{g}(\abs{x-y})$ and $\dim\ker\nabla^2\cH(\alpha)=3$.
\end{enumerate}
\end{enumerate}
Note that in the classical case $g=g_\Omega$ always inherits the symmetries of the domain.
\begin{Ex}\label{Ex:unit_disc}
Let $\Omega$ be the unit disc $B_1(0)$ and  $g=g_{B_1(0)}$ be the regular  part of the Dirichlet Green's function of $B_1(0)$, which is given by
$$
g(x,y)=g_{B_1(0)}(x,y)=-\frac{1}{4\pi}\log\left(\abs{x}^2\abs{y}^2-2\ska{x,y}_{\R^2}+1\right).
$$ 
The $2$-vortex Hamiltonian $\cH_{B_1(0)}$ with vorticities $\Gamma^1=1$, $\Gamma^2=-1$ satisfies (A1$^\prime$) with a degenerate critical point $\alpha=\big((\mu,0),(-\mu,0)\big)$, where $\mu=\sqrt{\sqrt{5}-2}$. This will be shown in section \ref{sec:example}. 
\end{Ex}

\begin{Rem} If $\Omega=\R^2$, $g=g_{\R^2}\equiv 0$ then critical points of $\cH_{\R^2}$ exist depending on the vorticities $\Gamma^1,\ldots,\Gamma^m$. In the easiest case $m=3$ vortices with strengths $\Gamma^k$ satisfying $\Gamma^1\Gamma^2+\Gamma^1\Gamma^3+\Gamma^2\Gamma^3=0$ are stationary when placed at certain distances along a fixed line, see Theorem 2.2.1 in \cite{newton_n-vortex_2001}. More on stationary configurations can also be found in \cite{aref_vortex_2003}. However for every critical point $\alpha$ of $\cH_{\R^2}$ the inequality $\dim\ker \nabla^2\cH_{\R^2}(\alpha)\geq 4$ holds true. Here $3$ dimensions of the kernel are induced by translations and rotations of the critical point $\alpha$. A fourth dimension by scaling, since differentiation of $\lambda\mapsto \cH_{\R^2}(\lambda\alpha)$ at $\lambda=1$ shows that $\sum_{k\neq k'}\Gamma^k\Gamma^{k'}=0$ is a necessary condition for the existence of critical points. Therefore we have $\cH_{\R^2}(\lambda\alpha)=\cH_{\R^2}(\alpha)$ and 
$$
\ker\nabla^2\cH_{\R^2}(\alpha)\supset \set{(a,\ldots,a)\in\R^{2m}}\oplus\R J_m\alpha\oplus\R\alpha.
$$
 This means that (A1$^\prime$) never holds for critical points of the classical $m$-vortex Hamiltonian $\cH_{\R^2}$, cf. Remark \ref{rem:A2_violated}.
\end{Rem}
\begin{Rem}
Another idea for the existence of periodic solutions is the application of a Weinstein-Moser Theorem \cite{bartsch_generalization_1997, moser_periodic_1976,weinstein_normal_1973} to obtain periodics for the Hamiltonian $\cH_\Omega$ itself via bifurcation from the critical point $\alpha$. But here one encounters the difficulties that $\alpha$ and $\cH_\Omega$ are not explicitely known as well.
\end{Rem}
\subsection{\texorpdfstring{Relative equilibria on $\R^2$}{About relative equilibria}}
\label{subsec:relative_equilibria}
For the $N$-vortex problem on $\Omega=\R^2$ quite a lot of rigidly rotating vortex configurations are known, see \cite{aref_relative_2011,aref_vortex_2003} for an overview. Checking the nondegeneracy condition of such a configuration is, after writing \eqref{eq:linearization_on_R2} in a rotating coordinate frame, a matter of calculating the spectrum of a $2N\times 2N$ matrix. The spectral properties of this matrix are also of interest in the investigation of the  linear stability of the configuration as a periodic solution. So we can use results of Roberts, \cite{roberts_stability_2013} to verify the nondegeneracy.
\begin{Ex}\label{ex:relative_equilibria} The following relative equilibrium solutions are nondegenerate after normalization (scaling and translation):
\begin{itemize}
\item $N=2$, $\Gamma_1+\Gamma_2\neq 0$, $Z(0)\in\cF_2(\R^2)$ arbitrary, cf. Example 2.3 in \cite{bartsch_global_2016},
\item $N=3$, $\Gamma_1+\Gamma_2+\Gamma_3\neq 0$, $0\neq \Gamma_1\Gamma_2+\Gamma_1\Gamma_3+\Gamma_2\Gamma_3\neq \Gamma_1^2+\Gamma_2^2+\Gamma_3^2$, $Z_1(0),Z_2(0),Z_3(0)$ forming an equilateral triangle, cf. Example 2.4 in \cite{bartsch_global_2016},
\item $N\in\N$, $\Gamma_1=\ldots=\Gamma_N$, $Z_j(0)=(x_k,0)$, $j=1,\ldots,N$ with $x_1,\ldots,x_N$ being the roots of the $N$th Hermitian polynomial, see Corollary 3.3 in \cite{roberts_stability_2013}. 
\end{itemize}
\end{Ex}
Observe that the condition for the equilateral triangle configuration excludes the special case $\Gamma_1=\Gamma_2=\Gamma_3$. Nonetheless with a second refinement we can also treat this case leading to solutions for \eqref{eq:general_ham_system} in which the vortices of a subgroup may form choreographies.

The permutation group $\Sigma_{N}$ of $N$ symbols acts orthogonally on $\R^{2N}$ via permutation of components, i.e.
$$
\sigma*z=\left(z_{\sigma^{-1}(1)},\ldots,z_{\sigma^{-1}(N)}\right),\quad \sigma\in\Sigma_{N},~z\in\R^{2N}.
$$
\begin{Def}A relative equilibrium solution $Z(t)$ of the whole plane system is called $\sigma$-nondegenerate, provided $\sigma*Z(\cdot+2\pi)=Z$ and \eqref{eq:linearization_on_R2} has only three linear independent solutions satisfying $\sigma*w(\cdot+2\pi)=w$. 
\end{Def} 
Note that every nondegenerate relative equilibrium is $\sigma$-nondegenerate with $\sigma=\id_{\Sigma_N}$. 
As a nontrivial example we have
\begin{Ex}\label{ex:relative_equilibrium_2}
$N\in\N$ identical vortices placed at the vertices of a regular $N$-Gon form a rigidly rotating configuration, called Thomson's $N$-Gon configuration. It is (after scaling) a $\sigma$-nondegenerate relative equilibrium solution with $\sigma=(1~2~\ldots~N)\in\Sigma_N$, see Lemma 4.1 in \cite{bartsch_periodic_2016-1}.
\end{Ex}
Concerning our situation we weaken assumption (A3) to
\begin{enumerate}[leftmargin=2.5\parindent]
\item[(A3$^\prime$)] For each $k\in\set{1,\ldots,l}$ there exists $\sigma_k\in\Sigma_{N_k}$ with $\Gamma^k_j=\Gamma^k_{\sigma^{-1}_k(j)}$  for every $j=1,\ldots,N_k$, together with a $\sigma_k$-nondegenerate relative equilibrium solution $Z^k(t)=\exp\big(\pm J_{N_k}t/\ord(\sigma_k)\big)z^k$ of \eqref{eq:Ham_N_k_vortex_system}. For consistency in notation let $Z^k\equiv 0$ and $\sigma_k=\id_{\Sigma_1}$ when $k\in\set{l+1,\ldots,m}$.
\end{enumerate}

\subsection{Statement of results part 2}
For $\left(\sigma_k,Z^k\right)_{k=1}^m$ as in (A3$^\prime$) let $\tau=2\pi\ord(\sigma)$, where $\ord(\sigma)$ denotes the order of $\sigma=(\sigma_1,\ldots,\sigma_m)\in\prod_k\Sigma_{N_k}$, further on let $\sigma*z=(\sigma_1*z^1,\ldots,\sigma_m*z^m)$ for $z=(z^1,\ldots,z^m)\in\R^{2N}$. Observe that $\cM$ as defined in \eqref{eq:definition_of_M} is now contained in $H^1_\tau$.
We have the following generalization of Theorem \ref{thm:1}.
\begin{Thm}\label{thm:2}
Assume that (A1$^{\hspace{1pt}\prime}$), (A2) and (A3$^{\hspace{1pt}\prime}$) hold. Then there exists $T_0>0$ such that \eqref{eq:general_ham_system} has $l$ distinct $T$-periodic orbits for every $T\in(0,T_0)$. 
Similar to Theorem \ref{thm:1} if we rescale a sequence $(z_n)_{n\in\N}$ of these solutions with periods $T_n\rightarrow 0$ by
$$
z_n(t)=r_nu_n\left(\frac{t}{r_n^2}\right)+\hat{\alpha},\quad r_n=\sqrt{\frac{T_n}{\tau}},\quad u_n\in H^1_\tau,
$$
then $\dist(u_n,\cM)\rightarrow 0$ in $H^1_\tau$.
 Additionally  the $k$th subgroup, $k=1,\ldots,m$ of vortices $z^k(t)=(z^k_1(t),\ldots,z^k_{N_k}(t))$ of one of the $T$-periodic solutions $z(t)$ inherits the symmetry of the relative equilibrium $Z^k(t)$, i.e.
$$
\sigma*z(t+T/\ord(\sigma))=z(t).
$$
\end{Thm}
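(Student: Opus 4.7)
The plan is to rescale the problem so that the singular limit $r\to 0$ becomes explicit, perform a Lyapunov--Schmidt reduction onto the limiting critical manifold $\cM$, and finally extract the multiplicity via an equivariant min--max argument on the phase torus. Setting $z(t)=ru(t/r^2)+\hat{\alpha}$ with $r=\sqrt{T/\tau}$, the Hamiltonian system \eqref{eq:general_ham_system} becomes equivalent to a $\tau$-periodic equation $\Psi(u,r)=0$ which I would pose on the $\sigma$-invariant subspace $X_\sigma=\set{u\in H^1_\tau:\sigma*u(\cdot+2\pi)=u}$. In the limit $r=0$ the equation decouples: the intra-cluster terms collapse to the whole-plane $N_k$-vortex vector fields acting on the $k$th component, while the inter-cluster and regular-part contributions contract to $\nabla\cH(\alpha)=0$. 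Hence the zero set of $\Psi(\cdot,0)$ in $X_\sigma$ is precisely the $l$-dimensional manifold $\cM$ from \eqref{eq:definition_of_M}.

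To make an implicit function theorem work transversally to $\cM$, one needs the linearisation $D_u\Psi(u,0)$ at $u\in\cM$ to be Fredholm with kernel exactly $T_u\cM$. The restriction to $X_\sigma$ together with (A3$^\prime$) guarantees that the kernel of each cluster's internal linearisation reduces to the three Euclidean invariances (two translations and one rotation), while (A1$^\prime$) controls the Hessian of $\cH$ at $\alpha$ on the space of cluster centres. The crucial observation is that the rotational and translational degeneracies at the cluster level match the global symmetries of $\Omega$ and $g$ that appear in cases (ii)--(iv) of (A1$^\prime$), so by choosing an appropriate slice (fixing a rotation phase, a translation direction, or a scaling parameter depending on the case), the residual degeneracies cancel and the transverse linearisation becomes an isomorphism. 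The implicit function theorem then yields a smooth correction $u_r:\cM\to X_\sigma$ for $r\in[0,r_0)$ with $u_0=\id_\cM$, and a reduced functional $\Phi_r:\cM\to\R$ whose critical points correspond to genuine $\tau$-periodic solutions of the rescaled system.

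The main obstacle, as announced in the introduction, is organising the above cancellation in one framework that simultaneously accommodates all four cases of (A1$^\prime$) and the $\sigma_k$-nondegeneracy of each $Z^k$. I would handle this by extracting an auxiliary finite-dimensional bifurcation equation corresponding to the remaining symmetry directions, solving it using the reduced $m$-vortex Hessian together with the angular-momentum and linear-momentum integrals of the whole-plane $N_k$-vortex system (which provide exactly the right number of equations to pin down the surplus kernel directions), and only then applying a standard contraction argument in the orthogonal complement of $T_u\cM$. This is where the delicate interplay between the cluster-scale and the $\Omega$-scale symmetries must be unravelled carefully.

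For the multiplicity, $\cM$ is diffeomorphic to the $l$-torus $T^l$ of independent phase shifts, and $\Phi_r$ is invariant under the diagonal $S^1$-action of overall time translation inherited from the autonomous Hamiltonian. Since the quotient $T^l/S^1\cong T^{l-1}$ has Lusternik--Schnirelmann category equal to $l$, the reduced functional $\Phi_r$ admits at least $l$ distinct critical $S^1$-orbits, producing $l$ geometrically distinct $T$-periodic solutions. The convergence $\dist(u_n,\cM)\to 0$ in $H^1_\tau$ along any sequence with $T_n\to 0$ is then immediate from the construction $u_{r_n}\in u_{r_n}(\cM)\to\cM$, and the inherited symmetry $\sigma*z(t+T/\ord(\sigma))=z(t)$ is built into the choice of ambient space $X_\sigma$.
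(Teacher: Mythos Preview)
Your overall architecture is right---rescale, restrict to the $\sigma$-invariant space, reduce onto $\cM$, then count critical orbits via $\cat(\T^{l-1})=l$---and this matches the paper. But there is a genuine gap in the reduction step.

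You assert that the zero set of $\Psi(\cdot,0)$ in $X_\sigma$ is exactly $\cM$, and that the kernel of $D_u\Psi(u,0)$ is $T_u\cM$. Neither is true. At $r=0$ the system decouples into the whole-plane Hamiltonians $H^k_{\R^2}$, so the zero set is the full $(l+2m)$-dimensional orbit $\{\theta*Z+\hat a:\theta\in\T^m,\,a\in\R^{2m}\}$, and correspondingly
\[
\ker D_u\Psi(Z,0)=\spann\{\dot Z^1,\ldots,\dot Z^l\}\oplus D,\qquad D=\{\hat a:a\in\R^{2m}\}.
\]
Your proposed cure---that the $2m$ cluster-translation directions ``match the global symmetries of $\Omega$ and $g$''---is a miscount: the domain symmetries in (A1$^\prime$) contribute at most three directions, not $2m$. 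The vague appeal to ``angular-momentum and linear-momentum integrals'' does not close this gap either, since at $r=0$ the functional $\Phi_0$ is genuinely invariant under all of $D$ and no conservation law distinguishes a slice.

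What the paper actually does (Lemma~4.1) is an anisotropic rescaling of the equation: replace $\nabla\Phi_r$ by $(\id-P_D)\nabla\Phi_r+\tfrac{1}{r^2}P_D\nabla\Phi_r$. Since $\nabla\Phi_0$ is orthogonal to $D$, the $D$-component equals $-\tfrac{1}{r}P_D\nabla F(ru)$, whose limit as $r\to 0$ is $-P_D\nabla^2 F(0)u$; on constants $\hat a$ this is essentially $\nabla^2\cH(\alpha)a$. Thus the Hessian of the $m$-vortex Hamiltonian is \emph{forced} into the limiting linearisation, killing all of $D$ except $\{\hat a:a\in\ker\nabla^2\cH(\alpha)\}$. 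Only this residual subspace (of dimension $0$, $1$, or $3$) is then absorbed by the domain symmetries of (A1$^\prime$), exactly as you describe for the last step. Without this $1/r^2$ device your transverse linearisation is not an isomorphism and the implicit function theorem does not apply.
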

In the case that only the first vortex is splitted up into a configuration with at least two vortices, i.e. when $l=1$, we can slightly improve Theorem \ref{thm:2}.
\begin{Thm}\label{thm:case_m1} Let $l=1$, $g\in\cC^k(\Omega\times\Omega,\R)$ with $k\geq 2$. If (A1$^{\hspace{1pt}\prime}$)-(A3$^{\hspace{1pt}\prime}$) hold, then there exists $r_1>0$ and a $\cC^{k-2}$ map $u:[0,r_1)\rightarrow H^1_\tau$, $r\mapsto u^{(r)}$
with $u^{(0)}=Z=(Z^1,0,\ldots,0)\in H^1_\tau$, $\sigma*u^{(r)}(\cdot+2\pi)=u^{(r)}$ and such that 
$$
z^{(r)}(t)=ru^{(r)}\left(\frac{t}{r^2}\right)+\hat{\alpha}
$$
is a $\tau r^2$-periodic solution of \eqref{eq:general_ham_system} for every $r\in(0,r_1)$. Moreover if $k\geq 3$, then
$$
\partial_r u^{(0)}\in\set{\hat{a}:a\in\R^{2m}}\subset H^1_\tau.
$$
\end{Thm}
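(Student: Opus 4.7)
The plan is to apply the implicit function theorem to the rescaled equation. Following Section \ref{sec:ansatz}, the ansatz $z(t) = r u(t/r^2) + \hat{\alpha}$ turns \eqref{eq:general_ham_system} into a functional equation $\cF(r, u) = 0$ on $H^1_\tau$ which extends smoothly to $r = 0$ with $\cF(0, Z) = 0$. Since $g \in \cC^k$, the Hamiltonian gradient is of class $\cC^{k-1}$ and the induced map on $H^1_\tau$ is of class $\cC^{k-2}$. I would work in the closed subspace $\cX \subset H^1_\tau$ of functions satisfying $\sigma * u(\cdot + 2\pi) = u$, which contains $Z$ and is preserved by $\cF$.

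The bulk of the work is the analysis of $L := D_u \cF(0, Z)|_\cX$. When $l = 1$, its kernel splits as $\R\dot{Z} \oplus \set{\hat{a} : a \in \R^{2m}}$, augmented under the symmetry cases in (A1$^\prime$)(ii)--(iv) by further directions arising from symmetries of the ambient problem. Here $\dot{Z}$ reflects time-equivariance of $\cF$, and $\set{\hat{a} : a \in \R^{2m}}$ captures infinitesimal translations of each cluster center, which are exact solutions of the decoupled $r = 0$ equation. Lyapunov-Schmidt reduction as in Section \ref{sec:proof_of_thm2}, applied with $l = 1$, then shows that the reduction in the $\dot{Z}$-direction is automatically zero by time-equivariance, while the reduction along the $\set{\hat{a}}$-directions is governed at leading order by $\nabla^2 \cH(\alpha)$; the nondegeneracy assumption (A1$^\prime$) combined with the $\sigma_1$-nondegeneracy of $Z^1$ from (A3$^\prime$) yields invertibility of the reduced operator modulo the ambient symmetries, so that the implicit function theorem produces the desired $\cC^{k-2}$ branch $r \mapsto u^{(r)}$ with $u^{(0)} = Z$ and $\sigma * u^{(r)}(\cdot + 2\pi) = u^{(r)}$.

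The main technical obstacle is that $\partial_r \cF(0, Z) \equiv 0$. Expanding $r \nabla H(ru + \hat{\alpha})$ in powers of $r$, the intra-cluster log-singularities reproduce $J_N \nabla H^0(u)$ with no $r$-dependence, and the remaining $r$-derivative contributions from the inter-cluster log terms, from the $g$-part, and from the $h$-part combine, via the critical-point identity $\nabla\cH(\alpha) = 0$ together with assumption (A2) $\sum_j \Gamma^k_j = \Gamma^k$, to precisely zero at $u = Z$. Consequently the first-order Lyapunov-Schmidt reduction vanishes identically and one must push the expansion to order $r^2$, at which the reduced operator becomes proportional to $\nabla^2 \cH(\alpha)$; verifying this second-order expansion and handling the various symmetry cases in (A1$^\prime$) constitutes the technical heart of the proof.

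For the final assertion, assume $k \geq 3$ so that $r \mapsto u^{(r)}$ is $\cC^1$. Differentiating $\cF(r, u^{(r)}) = 0$ at $r = 0$ yields $L \cdot \partial_r u^{(0)} = -\partial_r \cF(0, Z) = 0$, hence $\partial_r u^{(0)} \in \ker L$. The gauge implicit in the implicit function theorem (namely $u^{(0)} = Z$ exactly, not a time-shift of it) eliminates the $\dot{Z}$-component of $\partial_r u^{(0)}$, and the symmetry-modding kills any components in the symmetry directions from (A1$^\prime$)(ii)--(iv); the only possible location for $\partial_r u^{(0)}$ is therefore the constant subspace $\set{\hat{a} : a \in \R^{2m}}$, with the specific element determined by the second-order bifurcation equation through $\nabla^2 \cH(\alpha)$.
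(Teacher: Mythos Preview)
Your proposal is essentially the same strategy as the paper's: Lyapunov--Schmidt reduction around $Z$ with the $D=\{\hat a:a\in\R^{2m}\}$ degeneracy handled by a second-order expansion governed by $\nabla^2\cH(\alpha)$, and the $\dot Z$-direction trivialized by time-equivariance (in the paper's language, the reduced functional $\varphi_r$ on the one-dimensional manifold $\cM$ is $S^1$-invariant, hence constant, when $l=1$).

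One point deserves sharpening. Your regularity count ``$g\in\cC^k\Rightarrow$ Hamiltonian gradient is $\cC^{k-1}\Rightarrow$ induced map is $\cC^{k-2}$'' is not the actual mechanism; the Nemytskii operator on $H^1_\tau$ loses no smoothness here since $H^1_\tau\hookrightarrow\cC^0$. The unmodified map $(r,u)\mapsto\nabla\Phi_r(u)$ is genuinely $\cC^{k-1}$, and a naive implicit function theorem would yield a $\cC^{k-1}$ branch. The drop to $\cC^{k-2}$ arises precisely from the rescaling you describe as ``pushing the expansion to order $r^2$'': the modified map $\psi_r$ that renders the linearization nondegenerate along $D$ contains the term $\frac{1}{r}\nabla F(ru)$, and showing that $(r,u)\mapsto\frac{1}{r}\nabla F(ru)$ extends to a $\cC^{k-2}$ map at $r=0$ is exactly the content of Lemma~\ref{lem:regularity_of_W}. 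So the regularity loss is tied to the degeneracy-breaking modification, not to the superposition operator. Your argument for $\partial_r u^{(0)}\in D$ via $L\cdot\partial_r u^{(0)}=-\partial_r\cF(0,Z)=0$ and the gauge fixing is correct and matches the paper's Lemma~\ref{lem:regularity_of_W}.
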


\section{Ansatz and preliminaries}\label{sec:ansatz}
Fix $\alpha$, $Z^k$, $\sigma_k$, $k=1,\ldots,m$ according to (A1$^\prime$), (A3$^\prime$) and let $\sigma=(\sigma_1,\ldots,\sigma_m)$.
We are looking for a solution $z:\R\rightarrow\cF_N(\Omega)$ where each subgroup of vortices $(z^k_1(t),\ldots,z^k_{N_k}(t))$ is located near $\alpha^k$ and forms a configuration close to a scaled version of the relative equilibrium $Z^k(t)$.

In order to reformulate the problem we define
\begin{equation*}\label{eq:definition_of_F}
F(z)=\sum_{\underset{k\neq k'}{k,k'=1}}^m\sum_{j=1}^{N_k}\sum_{j'=1}^{N_{k'}}\Gamma^k_j\Gamma^{k'}_{j'}G(z^k_j+\alpha^k,z^{k'}_{j'}+\alpha^{k'})-\sum_{k=1}^m\sum_{j,j'=1}^{N_k}\Gamma^k_j\Gamma^k_{j'}g(z^k_j+\alpha^k,z^k_{j'}+\alpha^k)
\end{equation*}
together with the following Hamiltonians $H_0:\cO_0:=\cF_{N_1}(\R^2)\times\ldots\times\cF_{N_m}(\R^2)\rightarrow \R$,
$$
H_0(u)=\sum_{k=1}^m\Hpla^k(u^k_1,\ldots,u^k_{N_k})
$$
and for $r>0$, $H_r:\cO_r:=\set{u\in\R^{2N}:ru+\hat{\alpha}\in\cF_N(\Omega) }\rightarrow\R$,
$$
H_r(u)=H_0(u)+F(ru)-\cH(\alpha).
$$
Observe that $F$ is defined on an open subset of $\R^{2N}$ containing $0$.
\begin{Lem}\label{Lem:Scaling_idea_for_superposition}
Let $I$ be an open intervall and $r>0$. Then $z(t)=ru(t/r^2)+\hat{\alpha}$ solves \eqref{eq:general_ham_system} on $I$ if and only if $u$ solves 
\begin{equation}\label{eq:Ham_r}
M_\Gamma\dot{u}=J_N\nabla H_r(u)
\end{equation}
on $r^2I$.
\end{Lem}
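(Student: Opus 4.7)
The plan is to verify the equivalence by a direct chain-rule computation, once we establish the key identity
$$
H(ru+\hat{\alpha}) \;=\; H_0(u) + F(ru) + C(r),
$$
where $C(r)$ depends only on $r$ and not on $u$. Granted this, differentiating in $u$ and using the chain rule on the right gives $r(\nabla H)(ru+\hat{\alpha}) = \nabla H_0(u) + r(\nabla F)(ru) = \nabla H_r(u)$, so the scaling formulas $z(t) = ru(t/r^2)+\hat{\alpha}$ and $\dot{z}(t) = r^{-1}\dot{u}(t/r^2)$ turn the equation $M_\Gamma\dot{z}=J_N\nabla H(z)$ at time $t\in I$ into $r^{-1}M_\Gamma\dot{u}(s) = r^{-1}J_N\nabla H_r(u(s))$ at time $s = t/r^2\in r^2I$, which after multiplication by $r$ is precisely \eqref{eq:Ham_r}.

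To prove the displayed identity I would split the double sum defining $H$ into intra-cluster contributions ($k=k'$, $j\neq j'$), inter-cluster contributions ($k\neq k'$), and the diagonal $h$-term. For the intra-cluster part, $(ru^k_j+\alpha^k)-(ru^k_{j'}+\alpha^k)=r(u^k_j-u^k_{j'})$, so writing $G=-\frac{1}{2\pi}\log|\cdot-\cdot|-g$ the logarithmic piece yields exactly $H_0(u)$ plus the $u$-independent constant $-\frac{\log r}{2\pi}\sum_k\sum_{j\neq j'}\Gamma^k_j\Gamma^k_{j'}$, while the regular piece contributes $-\sum_k\sum_{j\neq j'}\Gamma^k_j\Gamma^k_{j'}g(ru^k_j+\alpha^k,ru^k_{j'}+\alpha^k)$. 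Combining this with the diagonal term $-\sum_{k,j}(\Gamma^k_j)^2h(ru^k_j+\alpha^k)=-\sum_k\sum_{j}\Gamma^k_j\Gamma^k_j g(ru^k_j+\alpha^k,ru^k_j+\alpha^k)$ reconstitutes the unrestricted sum $\sum_k\sum_{j,j'}\Gamma^k_j\Gamma^k_{j'}g(\ldots)$, which is the second sum in $F(ru)$. The inter-cluster piece ($k\neq k'$) is already exactly the first sum in $F(ru)$. Absorbing the $\log r$-constant into $C(r)$ gives the identity, and the additive constant $-\mathcal{H}(\alpha)$ in the definition of $H_r$ plays no role for the gradient.

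Since $u\mapsto ru+\hat{\alpha}$ is a smooth diffeomorphism from $\mathcal{O}_r$ onto an open subset of $\mathcal{F}_N(\Omega)$ by the very definition of $\mathcal{O}_r$, the admissibility $z(t)\in\mathcal{F}_N(\Omega)$ on $I$ is equivalent to $u(s)\in\mathcal{O}_r$ on $r^2I$, and the entire chain of equivalences is reversible, giving both implications at once. The only real obstacle is bookkeeping: tracking which diagonal/off-diagonal index combinations end up in $H_0$ versus $F$, and observing that the $\log r$ and $\mathcal{H}(\alpha)$ contributions are $u$-independent and hence invisible to the equations of motion.
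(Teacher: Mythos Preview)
Your argument is correct and follows exactly the paper's own approach: the paper's proof consists of the single displayed identity
\[
H(ru+\hat{\alpha})=H_0(u)+F(ru)-\frac{1}{2\pi}\sum_{k=1}^m\sum_{j\neq j'}\Gamma^k_j\Gamma^k_{j'}\log r,
\]
together with the chain-rule substitution, and your splitting into intra-cluster, inter-cluster and diagonal terms reproduces precisely this formula (your $C(r)$ is the $\log r$ term). One small slip: with $t\in I$ and $s=t/r^2$ one has $s\in r^{-2}I$, not $r^2I$; this is inherited from the lemma's own statement and is irrelevant to the equations of motion.
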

\begin{proof}
Clearly $z(t)$ as above is a solution of \eqref{eq:Ham_omega} if and only if 
$$
M_\Gamma\dot{u}=rJ_N\nabla H(ru+\hat{\alpha})
$$
and 
$$
H(ru+\hat{\alpha})=H_0(u)+ F(ru)-\frac{1}{2\pi}\sum_{k=1}^m\sum_{\underset{j\neq j'}{j,j'=1}}^{N_k}\Gamma^k_j\Gamma^k_{j'}\log r .
$$
\end{proof}
\begin{Lem}\label{Lem:preliminaries_for_F_and_H_r}
The set $\cO:=\bigcup_{r\geq 0}\{r\}\times \cO_r$ is open in $[0,\infty)\times \R^{2N}$ and $H:\cO\rightarrow \R$, $(r,u)\mapsto H_r(u)$ is a $\cC^2$ function, especially $F(0)=\cH(\alpha)$. Furthermore
\begin{equation}
\begin{split}
\Gamma^k\nabla_{z^k_j}F(0)&=\Gamma^k_j\nabla_{a^k}\cH(\alpha)=0,\\
\Gamma^k\big(\nabla^2F(0)\hat{a}\big)_j^k&=\Gamma^k_j\big(\nabla^2\cH(\alpha)a\big)^k
\end{split}
\end{equation}
for any $(k,j)$ and $a\in\R^{2m}$.
\end{Lem}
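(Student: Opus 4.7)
The plan is to verify the four claims in turn; the computations are largely bookkeeping, and the one recurring ingredient is the normalization (A2) collapsing cluster-sums into the $m$-vortex vorticities.

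For openness of $\cO$, fix $(r_0,u_0)\in\cO$. If $r_0>0$, then $r_0 u_0+\hat\alpha\in\cF_N(\Omega)$, which is open, so by continuity of $(r,u)\mapsto ru+\hat\alpha$ a neighborhood in $(0,\infty)\times\R^{2N}$ lies in $\cO$. If $r_0=0$, I pick a bounded ball $B_\delta(u_0)\subset\cO_0=\cF_{N_1}(\R^2)\times\ldots\times\cF_{N_m}(\R^2)$ (itself open). For $r\in[0,r_1)$ small and $u\in B_\delta(u_0)$, each $ru^k_j+\alpha^k$ lies in $\Omega$ by openness of $\Omega$ around $\alpha^k$, and the $N$ points stay pairwise distinct: within each cluster by proximity to $u_0\in\cO_0$, across clusters because $\alpha^k\neq\alpha^{k'}$ for $k\neq k'$.

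For $\cC^2$-regularity, write $H(r,u)=H_0(u)+F(ru)-\cH(\alpha)$. The function $H_0$ is $\cC^\infty$ on $\cO_0$, being a finite sum of logarithms of nonzero cluster-internal differences. The function $F$ is defined on the open set $\set{z\in\R^{2N}:z+\hat\alpha\in\cF_N(\Omega)}$, which contains $0$ because $\alpha\in\cF_m(\Omega)$; there $F$ is $\cC^2$ since $g\in\cC^2$ and all inter-cluster log-terms have separated arguments. Composition with the smooth map $(r,u)\mapsto ru$ transfers $\cC^2$-regularity to $H$.

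The remaining identities all follow from (A2). Evaluating $F$ at $z=0$, the inner $j$- and $j'$-sums collapse via $\sum_j\Gamma^k_j=\Gamma^k$ to give $\sum_{k\neq k'}\Gamma^k\Gamma^{k'}G(\alpha^k,\alpha^{k'})-\sum_k(\Gamma^k)^2 h(\alpha^k)=\cH(\alpha)$. For $\nabla_{z^k_j}F(0)$, the $G$-part of $F$ contributes two equal terms from the pairs $(k,j,k',j')$ and $(k',j',k,j)$ via the symmetry $\nabla_2 G(x,y)=\nabla_1 G(y,x)$; after (A2) this becomes $2\Gamma^k_j\sum_{k'\neq k}\Gamma^{k'}\nabla_1 G(\alpha^k,\alpha^{k'})$, while the $g$-part yields $-\Gamma^k_j\Gamma^k(\nabla_1 g+\nabla_2 g)(\alpha^k,\alpha^k)=-\Gamma^k_j\Gamma^k\nabla h(\alpha^k)$. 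Factoring out $\Gamma^k_j/\Gamma^k$ identifies the result with $\nabla_{a^k}\cH(\alpha)$, which vanishes by (A1$^\prime$). For the Hessian formula I differentiate once more in the direction $\hat a$; since $\hat a$ places the identical vector $a^k$ on every vortex of cluster $k$, the chain rule reproduces $\nabla^2_{a^k}\cH(\alpha)\,a$ up to the same factor $\Gamma^k_j/\Gamma^k$, yielding the stated identity after multiplication by $\Gamma^k$. No step presents a deeper difficulty beyond careful tracking of indices and the symmetrizations of $G$ and $g$.
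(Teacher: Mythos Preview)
Your proposal is correct and follows essentially the same approach as the paper: direct computation, with (A2) collapsing the inner cluster-sums into the $m$-vortex quantities at $z=0$. You give more detail on openness and $\cC^2$-regularity than the paper (which dismisses these as ``easy to check'') and slightly less on the Hessian step (which the paper writes out term by term), but the strategy and the key ingredient are identical.
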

\begin{proof}
Openess and smoothness are easy to check, since by (A2) indeed
$$
F(0)=\sum_{\underset{k\neq k'}{k,k'=1}}^m\Gamma^k\Gamma^{k'}G(\alpha^k,\alpha^{k'})-\sum_{k=1}^m\Gamma^k\Gamma^kg(\alpha^k,\alpha^k)=\cH(\alpha).
$$
For the derivative of $F$ with respect to $z^k_j$ we have
\begin{align*}
\nabla_{z^k_j}F(z)&=2\sum_{\underset{k'\neq k}{k'=1}}^{m}\sum_{j'=1}^{N_{k'}}\Gamma^k_j\Gamma^{k'}_{j'}\nabla_1G(z^k_j+\alpha^k,z^{k'}_{j'}+\alpha^{k'})\\
&\hspace{120pt}-2\sum_{j'=1}^{N_k}\Gamma^k_j\Gamma^{k}_{j'}\nabla_1g(z^k_j+\alpha^k,z^{k}_{j'}+\alpha^{k})
\end{align*}
and therefore
\begin{align*}
\Gamma^k\nabla_{z^k_j}F(0)&=\Gamma^k_j\Gamma^k\left(2\sum_{\underset{k'\neq k}{k'=1}}^m\Gamma^{k'}\nabla_1G(\alpha^k,\alpha^{k'})-\Gamma^k\nabla h(\alpha^k)\right)\\
&=\Gamma^k_j\nabla_{a^k}\cH(\alpha)=0
\end{align*}
by (A1$^{\hspace{1pt}\prime}$).
Now let $a\in\R^{2m}$. The $(k,j)$th component of $\nabla^2F(0)\hat{a}$ is given by
\begin{align*}
\big(\nabla^2F(0)\hat{a}\big)^k_j&= \sum_{k'=1}^m\left(\sum_{j'=1}^{N_{k'}}\nabla_{z^{k'}_{j'}}\nabla_{z^k_j}F(0)\right)a^{k'}\\
&=2\Gamma^k_j\sum_{\underset{k'\neq k}{k'=1}}^m\Gamma^{k'}\big(\nabla_1^2G(\alpha^k,\alpha^{k'})a^k+\nabla_2\nabla_1G(\alpha^k,\alpha^{k'})a^{k'}\big)\\
&\hspace{75pt}-\Gamma^k_j\Gamma^k(\underset{=\nabla^2h(\alpha^k)}{\underbrace{2\nabla^2_1g(\alpha^k,\alpha^k)+2\nabla_2\nabla_1g(\alpha^k,\alpha^k)}})a^k\\
&=\frac{\Gamma^k_j}{\Gamma^k}\sum_{k'=1}^m\nabla_{a^{k'}}\nabla_{a^k}\cH(\alpha)a^{k'}=\frac{\Gamma^k_j}{\Gamma^k}\big(\nabla^2\cH(\alpha)a\big)^k.
\end{align*}
\end{proof}
Next we turn to the functional setting. Let $\tau:=2\pi\ord(\sigma)$.
In order to find $T$-periodic solutions of \eqref{eq:Ham_omega} with $T>0$ small, we use the variational structure of \eqref{eq:Ham_r} to look for $\tau$-periodic solutions of \eqref{eq:Ham_r} with $r>0$ small. We work on the Sobolev space $H^1_\tau$ as stated in section \ref{sec:results1} and will also need the corresponding spaces $L^2_\tau$ and $H^2_\tau$.
The action functional associated to \eqref{eq:Ham_r} is given by
$$
\Phi_r(u)=\frac{1}{2}\int_0^{\tau}\ska{M_\Gamma\dot{u},J_Nu}_{\R^2}\:dt-\int_0^{\tau}H_r(u)\:dt=\Phi_0(u)-\int_0^{\tau}F(ru)\:dt+\tau\cH(\alpha).
$$
Let $\Phi:\Lambda^\prime\rightarrow\R$, $(r,u)\mapsto\Phi_r(u)$, where 
$$
\Lambda^\prime:=\set{(r,u)\in[0,\infty)\times H^1_{\tau}:(r,u(t))\in\cO\text{ for all }t\in\R}.
$$
Then $\Lambda^\prime$ is open in $[0,\infty)\times H^1_{\tau}$, since $H^1_{\tau}$ embeds into $\cC^0_{\tau}$, $\Phi\in\cC^2(\Lambda^\prime,\R)$ due to Lemma \ref{Lem:preliminaries_for_F_and_H_r} and we have to solve $\nabla\Phi_r(u)=0$ for $(r,u)\in\Lambda^\prime$ with $r>0$.

The action $\sigma*z=(\sigma_1*z^1,\ldots,\sigma_m*z^m)$ on $\R^{2N}$, induces an action on $H^1_\tau$. Let 
$$
X=\set{u\in H^1_\tau: \sigma*u(\cdot+2\pi)=u},~\Lambda=\Lambda^\prime\cap (\R\times X),~ \Lambda_r=\set{u:(r,u)\in\Lambda}.
$$
Then $X$ is a complete subspace of $H^1_\tau$ and (A3$^{\hspace{1pt}\prime}$) implies $\nabla\Phi_r(u)\in X$ for $(r,u)\in\Lambda$, since indeed $H_r(\sigma*z)=H_r(z)$, $M_\Gamma(\sigma*z)=\sigma*(M_\Gamma z)$ yield $\Phi_r(\sigma*u(\cdot+2\pi))=\Phi_r(u)$ for any $(r,u)\in\Lambda^\prime$. So it is enough to find a critical point of the restriction $\Phi_{r|\Lambda_r}:\Lambda_r\rightarrow \R$. We denote the restriction $\Phi_{|\Lambda}$ again by $\Phi$. One has
\begin{align*}
\nabla\Phi_r(u)&=\nabla\Phi_0(u)-(\id-\Delta)^{-1}r\nabla F(ru)\\
&=(\id-\Delta)^{-1}\left(-J_NM_\Gamma\dot{u}-\nabla H_0(u)-r\nabla F(ru)\right),
\end{align*}
where $\Delta:H^2_{\tau}\rightarrow L^2_{\tau}$, $u\mapsto \ddot{u}$, such that for $v\in H^1_{\tau}$, $w\in L^2_{\tau}$ the relation
$$
\ska{v,(\id-\Delta)^{-1}w}_{H^1_\tau}=\int_0^{\tau}\ska{v,w}_{\R^{2N}}\:dt=\ska{v,w}_{L^2_{\tau}}
$$
holds true.
 Note that actually $\nabla\Phi\in\cC^1(\Lambda,H^2_\tau\cap X)$, where $H^2_\tau\cap X$ is equipped with the norm $\norm{\cdot}_{H^2_\tau}$.
\section{Proof of Theorem \ref{thm:2}}\label{sec:proof_of_thm2}
For $r\rightarrow 0$ the limiting equation of \eqref{eq:Ham_r} is the decoupled system
$$
\Gamma^k_j\dot{u}^k_j=J\nabla_{u^k_j}H_{\R^2}^k(u^k_1,\ldots,u^k_{N_k}),\quad j=1,\ldots,N_k,\quad k=1,\ldots,m.
$$
So by (A3$^\prime$), $Z(t):=(Z^1(t),\ldots,Z^m(t))\in X$ is a critical point of $\Phi_0$, which of course is not isolated due to the symmetries of $H_0$. Let 
$
D=\set{\hat{a}:a\in\R^{2m}}\subset X
$ and for $\theta=(\theta_1,\ldots,\theta_m)\in (\R/\tau\Z)^m=\T^m$, $u\in X$ define the shifted version
$
\theta*u\in X$ by $\big(\theta*u\big)^k_j=u^k_j(\cdot+\theta_k).
$
Then $\Phi_0(u+\hat{a})=\Phi_0(u)=\Phi_0(\theta*u)$ for any $u\in\Lambda_0$, $\hat{a}\in D$, $\theta\in \T^m$ indeed implies that
$$
\set{\theta*Z+\hat{a}:\theta\in\T^m,a\in\R^{2m}}
$$
is a $(l+2m)$-dimensional critical manifold of $\Phi_0$. Since every $Z^k$, $k=1,\ldots,l$ is by assumption (A3$^\prime$) a $\sigma_k$-nondegenerate solution of \eqref{eq:Ham_N_k_vortex_system}, we have 
\begin{equation}\label{eq:kernel_of_Phi_0_pprime}
\ker \nabla^2\Phi_0(Z)=\spann\set{\dot{Z}^1,\ldots,\dot{Z}^l}\oplus D.
\end{equation}
Here $\dot{Z}^k$ is meant to be the element $(0,\ldots,0,\dot{Z}^k,0,\ldots,0)\in X$.
Whereas this degeneracy is natural for the limiting case $r=0$, the functionals $\Phi_r$ with $r>0$ are in general neither invariant with respect to translations by elements of $D$ nor under the action of $\T^m$ - except for synchronous time shifts $\theta=(\theta_1,\ldots,\theta_1)\in\T^m$. To deal with the degeneracy of the limiting problem we modify our equation $\nabla\Phi_r(u)=0$.

For a subspace $Y\subset X$ we denote by $P_Y:X\rightarrow Y$ the orthogonal projection onto $Y$ and by $Y^\perp$ the orthogonal complement of $Y$ in $X$. Let 
$$
\cM=\T^m*Z,\quad Y=\set{\hat{a}:a\in\ker\nabla^2\cH(\alpha)}^\perp\subset X.
$$
\begin{Lem}\label{Lem:kernel_of_auxillary_map_Psi}
There exist constants $r_0,\rho>0$ with $[0,r_0)\times B_\rho(\cM)\subset\Lambda$, such that $\psi:\cU:=[0,r_0)\times\big(B_\rho(\cM)\cap Y\big)\rightarrow Y$,
$$
\psi_r(u)=\begin{cases}
(\id-P_D)\nabla\Phi_r(u)+\frac{1}{r^2}P_{D\cap Y}\nabla\Phi_r(u),&r>0,\\
\nabla\Phi_0(u)-P_{D\cap Y}\nabla^2F(0)u,&r=0
\end{cases}
$$
is continuous, $\cC^1$ on $\cU\cap ((0,r_0)\times X)$ with $D_u\psi$ continuous up to $r=0$ and satisfies for $(r,u)\in \cU$, $r>0$:
$$
\nabla\Phi_r(u)=0 \quad \Leftrightarrow\quad\psi_r(u)=0.
$$
Moreover $\cM$ is a nondegenerate $l$-dimensional manifold of zeroes of $\psi_0$. I.e. for any $v\in\cM$ there holds
$$
\psi_0(v)=0,\quad \ker D\psi_0(v)=T_v\cM=\spann\set{\dot{v}^1,\ldots,\dot{v}^l}.
$$
\end{Lem}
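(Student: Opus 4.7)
The plan is to verify the five assertions of the lemma in sequence. For the existence of $r_0$ and $\rho$, compactness of $\cM = \T^m*Z$ combined with the openness of $\Lambda$ (noted in Section~\ref{sec:ansatz}) yields them via a standard tubular-neighborhood argument. That $\psi$ takes values in $Y$ is built into the definition: the orthogonal splitting $X = Y^\perp \oplus (D\cap Y) \oplus D^\perp$, available because $Y^\perp \subset D$, places the two summands of $\psi_r(u)$ in $D^\perp \subset Y$ and in $D\cap Y \subset Y$ respectively.

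The first substantive point is the regularity of $\psi$. Smoothness on $\{r > 0\}$ is immediate, so the delicate issue is the $\frac{1}{r^2}$ factor at $r = 0$. Two identities drive the analysis: (a) $P_D\nabla\Phi_0(u) = 0$ for every $u \in \Lambda_0$, because translation invariance of each $\Hpla^k$ gives $\sum_j \nabla_{u^k_j}\Hpla^k(u^k) = 0$, combined with $\tau$-periodicity of $u$; (b) $\nabla F(0) = 0$ from Lemma~\ref{Lem:preliminaries_for_F_and_H_r} together with (A1$^\prime$). Consequently $P_{D\cap Y}\nabla\Phi_r(u) = -r\,P_{D\cap Y}\overline{\nabla F(ru)}$, and a Taylor expansion of $\nabla F(ru(t))$ at $0$ gives $\overline{\nabla F(ru)} = r\nabla^2 F(0)\bar u + O(r^2)$, so that $\frac{1}{r^2}P_{D\cap Y}\nabla\Phi_r(u) \to -P_{D\cap Y}\nabla^2 F(0)\bar u$ as $r \to 0$; this matches the stated $\psi_0(u)$ since $P_{D\cap Y}$ factors through the time-average. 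A parallel Taylor expansion of the linearization of $\nabla\Phi_r$ produces continuity of $D_u\psi$ up to $r = 0$.

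The equivalence at $r > 0$ is where I expect the main obstacle. The direction $\nabla\Phi_r(u) = 0 \Rightarrow \psi_r(u) = 0$ is immediate; for the converse, $\psi_r(u) = 0$ forces $\nabla\Phi_r(u) = \hat b$ for some $b \in \ker\nabla^2\cH(\alpha)$, so that $u$ satisfies the forced equation $M_\Gamma\dot u = J_N\nabla H_r(u) + J_N\hat b$. Showing $\hat b = 0$ I would handle case by case in (A1$^\prime$). Case (i) is trivial since $Y^\perp = \{0\}$. In cases (ii)-(iv), each generator of $\ker\nabla^2\cH(\alpha)$ corresponds to a continuous symmetry of the original $N$-vortex Hamiltonian $H$ (rotation or translation), and $F$ inherits these symmetries from $g$. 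Passing to unrescaled variables $z = ru(\cdot/r^2) + \hat\alpha$, the forced equation becomes $M_\Gamma\dot z = J_N\nabla H(z) + \frac{1}{r}J_N\hat b$; the corresponding Noether identity integrated over the period $\tau r^2$ reduces, after shrinking $r_0$ if necessary, to an equation of the form $\tau r \cdot C_i(r, u) \cdot \hat b_i = 0$ in each kernel direction, with $C_i$ bounded away from zero uniformly on $\cU$ (for the radial case the leading contribution involves $\sum_k N_k|\alpha^k|^2$). Hence $\hat b = 0$.

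For the final assertion, that $v = \theta*Z \in \cM$ satisfies $\psi_0(v) = 0$ follows from $\nabla\Phi_0(v) = 0$ combined with $\bar v = 0$, since each rotating component $Z^k$ averages to zero over an integer number of its periods. The inclusion $T_v\cM \subset \ker D\psi_0(v)$ is then immediate. For the reverse, decompose a kernel element $w \in Y$ along $Y = T_v\cM \oplus (D\cap Y) \oplus E$, where $E := (T_v\cM \oplus D)^\perp \cap Y$; by \eqref{eq:kernel_of_Phi_0_pprime} $\nabla^2\Phi_0(v)$ is invertible on $E$, so the $D^\perp$-component of $D\psi_0(v)w = 0$ forces the $E$-component of $w$ to vanish. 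The $(D\cap Y)$-component of the equation then reads $P_{D\cap Y}\nabla^2 F(0)w_{D\cap Y} = 0$; Lemma~\ref{Lem:preliminaries_for_F_and_H_r} identifies this restricted operator with a rescaling of $\nabla^2\cH(\alpha)$ restricted to $(\ker\nabla^2\cH(\alpha))^\perp$, which is invertible. Hence $w_{D\cap Y} = 0$ and $w \in T_v\cM$, completing the nondegeneracy.
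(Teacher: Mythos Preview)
Your argument is correct and lands on the same conclusions, but two steps are organized differently from the paper, and one of them hides a small imprecision you should be aware of.

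\textbf{The equivalence $\psi_r(u)=0 \Leftrightarrow \nabla\Phi_r(u)=0$.} You deduce the forced equation $M_\Gamma\dot z=J_N\nabla H(z)+\frac{1}{r}J_N\hat b$ and then integrate the Noether identity of each continuous symmetry over a period to kill $\hat b$. This works, and your remark that the radial case produces $\sum_k N_k|\alpha^k|^2$ as the leading coefficient is exactly right. The paper avoids the detour through the ODE: it simply observes that each symmetry of $\Omega$ and $g$ makes $\Phi_r$ invariant under a one-parameter group (e.g.\ $u\mapsto e^{\lambda J_N}(u+\frac{1}{r}\hat\alpha)-\frac{1}{r}\hat\alpha$ in the radial case), so $\langle\nabla\Phi_r(u),\xi\rangle=0$ for the corresponding generator $\xi$. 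One then only needs $X=Y\oplus\spann\{\xi\}$, which holds near $\{0\}\times\cM$ because the generators limit to a basis of $Y^\perp$ as $r\to 0$. Your approach and the paper's are equivalent in content; the paper's is a one-line variational version of your Noether computation.

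\textbf{The kernel of $D\psi_0(v)$.} The paper first introduces the unrestricted map $\bar\psi$ on $X$ (with $P_D$ in place of $P_{D\cap Y}$), computes $\ker D\bar\psi_0(v)=T_v\cM\oplus Y^\perp$, and then restricts to $Y$. You work directly on $Y$ via the splitting $Y=T_v\cM\oplus(D\cap Y)\oplus E$. This is fine, but your identification of $P_{D\cap Y}\nabla^2F(0)|_{D\cap Y}$ with ``$\nabla^2\cH(\alpha)$ restricted to $(\ker\nabla^2\cH(\alpha))^\perp$'' is not quite accurate: under $\hat a\leftrightarrow a$ the inner product on $D\subset\R^{2N}$ becomes the $N$-weighted inner product $\sum_k N_k\langle a^k,b^k\rangle$ on $\R^{2m}$, so $D\cap Y$ corresponds to the $N$-weighted complement $K^{\perp_N}$ of $K=\ker\nabla^2\cH(\alpha)$, not the standard one, and the operator becomes $P^N_{K^{\perp_N}}N^{-1}\nabla^2\cH(\alpha)$. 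Fortunately this is still injective: if $a\in K^{\perp_N}$ has $N^{-1}\nabla^2\cH(\alpha)a\in K$, pair with that element of $K$ to get $0=\langle\nabla^2\cH(\alpha)a,c\rangle=\langle Nc,c\rangle=\sum_kN_k|c^k|^2$, hence $c=0$, $a\in K\cap K^{\perp_N}=0$. So your conclusion stands, but the justification needs this extra line rather than a direct appeal to invertibility on the standard complement.
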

\begin{proof}
As a first step observe that for positive $r$, $\bar{\psi}_r:\Lambda_r\rightarrow X$,
\begin{align}\label{eq:definition_of_psi}
\begin{split}
\bar{\psi}_r(u)&=(\id-P_D)\nabla\Phi_r(u)+\frac{1}{r^2} P_D\nabla\Phi_r(u)\\
&=\nabla\Phi_0(u)-(\id-P_D)(\id-\Delta)^{-1}r\nabla F(ru)-\frac{1}{r}P_D\nabla F(ru)
\end{split}
\end{align}
has the same zeroes as $\nabla\Phi_r$. In the second equation we used that $\nabla\Phi_0$ maps into $D^\perp$, since $\Phi_0$ is invariant with respect to translations. Clearly $\bar{\psi}$ is $\cC^1$ as long as $r>0$. Since $F$ is $\cC^2$ and $\nabla F(0)=0$, $\bar{\psi}_r$ extends as $r\rightarrow 0$ continuously to $\bar{\psi}_0:\Lambda_0\rightarrow \R$, 
$$
\bar{\psi}_0(u)=\nabla\Phi_0(u)-P_D\nabla^2F(0)u.
$$
The partial derivative $D_u\bar{\psi}:\Lambda\rightarrow\cL(X)$ is continuous as well and the regularity of $\bar{\psi}$ will carry over to $\psi$ once we have defined it.

Now let $v\in\cM$. Since $Z^k(t)=\exp\big(\pm J_{N_k}t/\ord(\sigma_k)\big)z^k$ or $Z^k(t)\equiv 0$ due to (A3$^\prime$) , we see that $\nabla^2F(0)v\in D^\perp\subset Y$. Hence $\bar{\psi}_0(v)=0$. Next
$$
\ker D\bar{\psi}_0(v)\underset{\eqref{eq:kernel_of_Phi_0_pprime}}{=}\left(\spann\set{\dot{v}^1,\ldots,\dot{v}^l}\oplus D\right)\cap\ker P_D\nabla^2F(0)
$$
and 
\begin{align*}
P_D\nabla^2F(0)\left[\sum_k\lambda_k\dot{v}^k+\hat{a}\right]=P_D\nabla^2F(0)\hat{a}.
\end{align*}
By Lemma \ref{Lem:preliminaries_for_F_and_H_r}, $\nabla^2F(0)\hat{a}=M_\Gamma \hat{b}$ with $b^k=\frac{1}{\Gamma^k}\big(\nabla^2\cH(\alpha)a\big)^k$, which projected onto $D$ gives $P_DM_\Gamma\hat{b}=\hat{c}$ with $c^k=\frac{\Gamma^k}{N_k}b^k$.
Hence we see that $\sum_k\lambda_k\dot{v}^k+\hat{a}$ is an element of the kernel of $D\bar{\psi}_0(v)$ if and only if $a\in\ker \nabla^2\cH(\alpha)$, which means $\hat{a}\in Y^\perp$. So if we restrict $\bar{\psi}$ to $\psi$ as stated in the Lemma, especially  $D\psi_0(v)=P_YD\bar{\psi}_0(v):Y\rightarrow Y$, we get
$$
\ker D\psi_0(v)=\spann\set{\dot{v}^1,\ldots,\dot{v}^l}=T_v\cM.
$$

It remains to prove that $\psi_r(u)=0$  for $r>0$ small, $u\in Y$ close to $\cM$ implies $\nabla\Phi_r(u)=0$. Note that $\psi_r(u)=0$ if and only if $P_Y\nabla\Phi_r(u)=0$.
If $\alpha\in\cF_m(\Omega)$ is a nondegenerate critical point of $\cH$ as in (A1$^\prime$)(i), we have $Y=X$ and are done. Otherwise by (A1$^\prime$), $\Omega$, $g$ and hence also $G$ and $h$ are invariant with respect to translations and/or rotations.

Assume first that (iii) of (A1$^\prime$) holds, i.e. $\lambda\nu+\Omega=\Omega$, $g(x+\lambda\nu,y+\lambda\nu)=g(x,y)$ for any $x,y\in\Omega$, $\lambda\in\R$ and some $\nu\in\R^2\setminus\{0\}$. Then $\cH(\alpha+\lambda\check{\nu})=\cH(\alpha)$, where $\check{\nu}=(\nu,\ldots,\nu)\in\R^{2m}$, and $\Phi_r(u+\lambda\hat{\check{\nu}})=\Phi_r(u)$ show that $\hat{\check{\nu}}\in Y^\perp$ and $\ska{\nabla\Phi_r(u),\hat{\check{\nu}}}=0$ for any $u\in\Lambda_r$. So if $\nu$ is the only direction, in which $g$ is invariant, then $X=Y\oplus\R\hat{\check{\nu}}$ by (A1$^\prime$) and $P_Y\nabla\Phi_r(u)=0$ automatically gives $\nabla\Phi_r(u)=0$.

If $\Omega$ and $g$ are rotational invariant, i.e. $e^{\lambda J}\Omega=\Omega$, $g(e^{\lambda J}x,e^{\lambda J}y)=g(x,y)$ for any $\lambda\in\R$, $x,y\in\Omega$, we obtain $J_m\alpha\in\ker\nabla^2\cH(\alpha)$, since $\cH(e^{\lambda J_m}\alpha)=\cH(\alpha)$ for any $\lambda\in\R$. For $\Phi_r$ there holds 
$$
\Phi_r\left(e^{\lambda J_N}\Big(u+\frac{1}{r}\hat{\alpha}\Big)-\frac{1}{r}\hat{\alpha}\right)=\Phi_r(u)
$$
and therefore $\ska{\nabla\Phi_r(u),J_N(ru+\hat{\alpha})}=0$ for any $u\in\Lambda_r$. Assuming that $\Omega$, $g$ have no other symmetry properties leads to the fact that $P_Y\nabla\Phi_r(u)=0$ implies $\nabla\Phi_r(u)=0$ as long as $X=Y\oplus \R J_N(ru+\hat{\alpha})$. Due to $J_N\hat{\alpha}\in Y^\perp$ we can find a subset $[0,r_0)\times B_\rho(\cM)\subset \Lambda$ on which this condition holds. This settles case (A1$^\prime$)(ii).

In the remaining case (A1$^\prime$)(iv), where $\Omega=\R^2$ we have to choose the neighbourhood of $\{0\}\times \cM$ such that
$$
X=Y\oplus\spann\set{\hat{\check{e}}_1,\hat{\check{e}}_2, J_N(ru+\hat{\alpha})}.
$$
\end{proof}
\begin{Rem}\label{rem:A2_violated}
If $\alpha$ is a critical point of $\cH$ not satisfying (A1$^\prime$), then Lemma \ref{Lem:kernel_of_auxillary_map_Psi} remains true with the exception that $\psi_r(u)=0$ only implies $P_Y\nabla\Phi_r(u)=0$.
\end{Rem}

So far we have reduced the degeneracy of the limiting problem by $2m=\dim D$ dimensions. To overcome the remaining degeneracy induced by the $l$ independent time shifts of $Z^1,\ldots,Z^l$ we perform a Lyapunov-Schmidt reduction.

For $v\in\cM$ denote by $P_v:X\rightarrow T_v\cM\subset Y$ the orthogonal projection onto $T_v\cM$. Moreover define $\tilde{\psi}:\tilde{\cU}:=[0,r_0)\times\cM\times (B_\rho(0)\cap Y)\rightarrow Y$,
$$
\tilde{\psi}(r,v,w)=(\id-P_v)\psi_r(v+w)+P_v w.
$$
Since $\cM\ni v\mapsto P_v\in\cL(X)$ is $\cC^1$, we have $\tilde{\psi}\in\cC^1$ where $r>0$, as well as continuity of $\tilde{\psi}$, $D_v\tilde{\psi}$, $D_w\tilde{\psi}$ on all of $\tilde{\cU}$. For $(r,v,w)\in\tilde{\cU}$ there holds
$$
\psi_r(v+w)=0,~w\perp T_v\cM
 \quad\Longleftrightarrow \quad\begin{cases}
P_v\psi_r(v+w)=0,\\
\tilde{\psi}(r,v,w)=0.
\end{cases}
$$
\begin{Lem}\label{lem:ift}
Shrinking $r_0>0$ and $\rho>0$ if necessary, we find a continuous map $W:[0,r_0)\times \cM\rightarrow B_\rho(0)\cap Y$ satisfying $W(r,v)\perp T_v\cM$ for any $(r,v)\in [0,r_0)\times\cM$ and 
$$
\tilde{\psi}(r,v,w)=0\quad \Longleftrightarrow\quad w=W(r,v)
$$
on $\tilde{\cU}$. Moreover each $W(r,\cdot):\cM\rightarrow B_\rho(0)$ is equivariant with respect to the orthogonal action of $\set{\theta\in\T^m:\theta_1=\ldots=\theta_m}\cong S^1$ on $X$. Concerning regularity we have $W\in\cC^1((0,r_0)\times \cM)$, and $D_vW$ is as $W$ itself continuous up to $r=0$.  
\end{Lem}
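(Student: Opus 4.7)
The strategy is to apply the implicit function theorem to $\tilde\psi$ at each base point $(0, v, 0)$ with $v \in \cM$, and then to patch the resulting local branches into one continuous map $W$ using the uniqueness coming from the IFT together with the fact that $\cM$ is diffeomorphic to the compact torus $\T^l$ (since $Z^{l+1},\ldots,Z^m$ are trivial). The equivariance under the diagonal $S^1$-action will follow from invariance properties built into $\tilde\psi$.

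The first IFT hypothesis, $\tilde\psi(0, v, 0) = 0$ for every $v \in \cM$, is immediate from Lemma~\ref{Lem:kernel_of_auxillary_map_Psi} because $\psi_0$ vanishes on $\cM$. The technical heart is to show that $L_v := D_w \tilde\psi(0, v, 0) = (\id - P_v)\, D\psi_0(v) + P_v$, viewed as an operator from $Y$ to itself, is an isomorphism. Since $(\id - \Delta)^{-1}: L^2_\tau \to H^1_\tau$ is compact, $D\psi_0(v)$ is a compact perturbation of the identity on $Y$, and so is $L_v$; in particular $L_v$ is Fredholm of index zero and it suffices to verify injectivity. If $L_v h = 0$, the two summands lie in the orthogonal subspaces $(T_v\cM)^\perp \cap Y$ and $T_v\cM$, respectively, so both vanish: $P_v h = 0$ (hence $h \perp T_v\cM$) and $D\psi_0(v) h \in T_v\cM$. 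A Fredholm-alternative argument exploiting the variational origin of $\psi_0$---self-adjointness of $\nabla^2 \Phi_0(v)$ with respect to the $H^1_\tau$-pairing, together with the explicit action of $\nabla^2 F(0)$ recorded in Lemma~\ref{Lem:preliminaries_for_F_and_H_r}---identifies $T_v\cM$ as complementary to $\range D\psi_0(v)$ inside $Y$, forcing $D\psi_0(v) h = 0$. Combining $h \in \ker D\psi_0(v) = T_v\cM$ with $h \perp T_v\cM$ then yields $h = 0$.

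With $L_v$ invertible, the IFT supplies, near each $v_0 \in \cM$, a $C^1$ map $(r, v) \mapsto W_{v_0}(r, v) \in (T_v\cM)^\perp \cap Y \cap B_\rho(0)$ that uniquely solves $\tilde\psi(r, v, W_{v_0}(r, v)) = 0$; covering the compact manifold $\cM$ by finitely many such neighbourhoods and invoking uniqueness on overlaps yields, after shrinking $r_0$ and $\rho$, the required global map $W$. Its $C^1$-regularity for $r > 0$ and the continuity of $W$ and of $D_v W$ up to $r = 0$ are inherited from the corresponding properties of $\tilde\psi$ via the parametric IFT. For equivariance, the synchronous diagonal $S^1 \cong \set{\theta \in \T^m : \theta_1 = \cdots = \theta_m}$-action on $X$ leaves $\Phi_r$, $F$ and $Y$ invariant and satisfies $P_{\theta * v} = \theta * P_v * \theta^{-1}$, so $\tilde\psi(r, \theta * v, \theta * w) = \theta * \tilde\psi(r, v, w)$; uniqueness of $W(r, \cdot)$ then forces $W(r, \theta * v) = \theta * W(r, v)$. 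The main obstacle is the invertibility step: because $P_{D \cap Y}$ does not in general commute with $\nabla^2 F(0)$, the operator $D\psi_0(v)$ need not be self-adjoint on $Y$, and one must carefully extract the Fredholm alternative from the Hamiltonian structure of $\Phi_0$ together with the explicit form of $\nabla^2 F(0)$ on $D$ from Lemma~\ref{Lem:preliminaries_for_F_and_H_r}.
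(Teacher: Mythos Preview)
Your overall plan---IFT at each $(0,v,0)$, patch by compactness of $\cM$, deduce equivariance from that of $\tilde\psi$---matches the paper. The injectivity argument for $L_v$ is also essentially right: since $T_v\cM\subset\ker\nabla^2\Phi_0(v)$ and $\nabla^2\Phi_0(v)$ is self-adjoint, one has $P_v\nabla^2\Phi_0(v)=0$; together with $P_vP_{D\cap Y}=0$ this gives $P_vD\psi_0(v)=0$, so $(\id-P_v)D\psi_0(v)=D\psi_0(v)$ and $L_v=D\psi_0(v)+P_v$, from which $\ker L_v=\{0\}$ follows immediately from $\ker D\psi_0(v)=T_v\cM$.

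However, there is a genuine gap in your surjectivity step. You assert that $D\psi_0(v)$ is a compact perturbation of the identity on $Y$, hence Fredholm of index zero. This is false: there is no identity component at all. From
\[
\nabla^2\Phi_0(v)[h]=(\id-\Delta)^{-1}\bigl(-J_NM_\Gamma\dot h-\nabla^2H_0(v)h\bigr)
\]
one sees that the principal part is $h\mapsto(\id-\Delta)^{-1}(-J_NM_\Gamma\dot h)$, which as a map $H^1_\tau\to H^1_\tau$ is \emph{compact} (it factors through the compact embedding $H^2_\tau\hookrightarrow H^1_\tau$), not the identity. The remaining terms are compact as well, and $P_{D\cap Y}\nabla^2F(0)$ has finite rank. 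Consequently $L_v:Y\to Y$ is compact, hence not Fredholm on the infinite-dimensional space $Y$, and certainly not surjective. The paper flags exactly this: ``note that $\range(T)\neq Y$''.

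The repair in the paper is to change the target space. One introduces the isomorphism $L:H^s_\tau\to H^{s+1}_\tau$, $Lu=(\id-\Delta)^{-1}(-J_NM_\Gamma\dot u)+P_0u$, and observes that $L^{-1}\tilde\psi(0,v,\cdot)-\id$ maps $Y$ to $Y$ and is compact (the smoothness of $v$ makes the lower-order terms regularity-gaining). Thus $L^{-1}L_v:Y\to Y$ is genuinely Fredholm of index zero; combined with your (correct) injectivity argument, $L_v:Y\to H^2_\tau\cap Y$ is an isomorphism, and one applies the IFT with $\tilde\psi$ regarded as a map into $H^2_\tau\cap Y$. Without this change of codomain your IFT invocation does not go through.
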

\begin{proof}
Let $v\in\cM$. One has $\tilde{\psi}(0,v,0)=0$ and 
$$
T:=D_w\tilde{\psi}(0,v,0)=(\id-P_{v})D\psi_0(v)+P_{v}=D\psi_0(v)+P_{v}
$$ 
has trivial kernel by Lemma \ref{Lem:kernel_of_auxillary_map_Psi}. But note that $\range (T)\neq Y$, in fact $T$ is an isomorphism between $Y$ and $H^2_\tau\cap Y$, as can be seen in the following way:

Let $P_0:H^1_{\tau}\rightarrow \R^{2N}$ be the orthogonal projection onto the space of constant functions and $L:H^s_{\tau}\rightarrow H^{s+1}_{\tau}$, $u\mapsto (\id-\Delta)^{-1}(-J_NM_\Gamma\dot{u})+P_0u$. Then $L$ is an isomorphism, also when viewed as a mapping from $Y\rightarrow H^2_\tau\cap Y$. Since $v$ is smooth, $L^{-1}\tilde{\psi}(0,v,\cdot)-\id:B_\rho(0)\cap Y\rightarrow Y$ is continuously differentiable and maps bounded subsets onto relatively compact subsets. Hence $L^{-1}T:Y\rightarrow Y$ is an index $0$ Fredholm operator with trivial kernel. Therefore $T:Y\rightarrow L Y=H^2_{\tau}\cap Y$ is an isomorphism.

Note also that $\tilde{\psi}$ viewed as a map into $H^2_{\tau}\cap Y$ with $\norm{\cdot}_{H^2_\tau}$ instead of $Y$ has the same regularity as the original $\tilde{\psi}$. So the implicit function theorem yields local maps $W_v$ solving the stated equation on $[0,r_v)\times U_v \times B_{\rho_v}(0)$, where $U_v\subset \cM$ is an open neighbourhood of $v$.

However the compactness of $\cM$ and the uniqueness of the solution allow us to construct a global map $W$ as requested by the Lemma. The equivariance with respect to synchronuous time shifts follows from the corresponding equivariance of $\tilde{\psi}$, i.e. $\tilde{\psi}(r,\theta*v,\theta*w)=\theta*\tilde{\psi}(r,v,w)$.
\end{proof}
For $r\in(0,r_0)$, $v\in\cM$ it now remains to solve 
$$
P_v\psi_r(v+W(r,v))=0.
$$
Therefore let $\varphi:[0,r_0)\times\cM\rightarrow \R$,
\begin{equation}\label{eq:definition_of_varphi}
\varphi(r,v)=\varphi_r(v)=\Phi_r(v+W(r,v)).
\end{equation}
\begin{Lem}\label{lem:critical_points_of_varphi_imply_}
There exists $r_1\in(0,r_0)$ such that $r\in(0,r_1)$, $D\varphi_r(v)=0$ implies $P_v\psi_r(v+W(r,v))=0$.
\end{Lem}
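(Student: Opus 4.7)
The plan is to compute $D\varphi_r(v)$ on tangent vectors $\dot v^k \in T_v\cM$ via the chain rule, and then to show that vanishing of $D\varphi_r(v)$ reduces to a linear system on the coefficients of $\psi_r(v+W(r,v))$ with respect to the basis $(\dot v^1,\ldots,\dot v^l)$, whose matrix is, for small $r$, a perturbation of the Gram matrix of this basis.

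As preparation I would first observe that $\tilde\psi(r,v,W(r,v))=0$ together with $W(r,v)\perp T_v\cM$ force $(\id-P_v)\psi_r(u)=0$, i.e.\ $\psi_r(u)\in T_v\cM$ for $u:=v+W(r,v)$. Since the tangent vectors $\dot v^k$ are time derivatives of $\tau$-periodic functions, they are $H^1_\tau$-orthogonal to the space $D$ of constant functions; consequently $\psi_r(u)\perp D\cap Y$. The defining formula $\psi_r=(\id-P_D)\nabla\Phi_r+r^{-2}P_{D\cap Y}\nabla\Phi_r$ then yields $P_{D\cap Y}\nabla\Phi_r(u)=0$, and hence $\psi_r(u)=P_Y\nabla\Phi_r(u)$.

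Differentiating $\varphi_r(v)=\Phi_r(v+W(r,v))$ and using that both $\dot v^k$ and $D_vW(r,v)[\dot v^k]$ lie in $Y$, so that the $Y^\perp$-component of $\nabla\Phi_r(u)$ contributes nothing, gives
\begin{equation*}
D\varphi_r(v)[\dot v^k]=\bigl\langle\psi_r(u),\,\dot v^k+D_vW(r,v)[\dot v^k]\bigr\rangle_{H^1_\tau}.
\end{equation*}
Writing $\psi_r(u)=\sum_{j=1}^l c_j(r,v)\,\dot v^j$, the condition $D\varphi_r(v)=0$ becomes $M(r,v)c=0$ with the $l\times l$ matrix
\begin{equation*}
M(r,v)_{kj}=\langle\dot v^j,\dot v^k\rangle_{H^1_\tau}+\langle\dot v^j,D_vW(r,v)[\dot v^k]\rangle_{H^1_\tau}.
\end{equation*}
The second summand is controlled by differentiating the identity $\langle W(r,v),\dot v^j\rangle_{H^1_\tau}=0$, valid along $\cM$, in the direction $\dot v^k$; this yields $|\langle\dot v^j,D_vW(r,v)[\dot v^k]\rangle_{H^1_\tau}|\le C\|W(r,v)\|_{H^1_\tau}$ with $C$ uniform on the compact manifold $\cM$.

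Since $W(0,v)=0$ and $W$ is continuous, $M(r,v)$ converges to the Gram matrix of $(\dot v^1,\ldots,\dot v^l)$ uniformly in $v\in\cM$ as $r\to 0$. That Gram matrix is invertible by (A3$^\prime$) and, by compactness of $\cM$, stays uniformly invertible. Hence there exists $r_1\in(0,r_0)$ such that $M(r,v)$ is invertible for every $r\in(0,r_1)$ and every $v\in\cM$; for such $(r,v)$, $D\varphi_r(v)=0$ forces $c=0$, which means $\psi_r(v+W(r,v))=0$ and in particular $P_v\psi_r(v+W(r,v))=0$. The main obstacle is the off-diagonal coupling term $\langle\dot v^j,D_vW(r,v)[\dot v^k]\rangle_{H^1_\tau}$, which is handled precisely by the orthogonality $W\perp T_v\cM$ together with the vanishing of $W$ at $r=0$.
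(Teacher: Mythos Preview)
Your argument is correct and follows essentially the same route as the paper: both reduce $D\varphi_r(v)=0$ to the invertibility of a linear map on $T_v\cM$ that is a small perturbation of the identity (respectively, of the Gram matrix), the smallness coming from the orthogonality $W(r,v)\perp T_v\cM$. The only minor variation is that the paper differentiates $P_vW(0,v)=0$ and then invokes the continuity of $D_vW$ at $r=0$, whereas you differentiate $\langle W(r,v),\dot v^j\rangle=0$ at positive $r$ and use only $W(r,v)\to 0$; this is a cosmetic difference since Lemma~\ref{lem:ift} already supplies both regularities.
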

\begin{proof}
Differentiation of $P_vW(0,v)=0$ shows that $P_vD_vW(0,v)=0$ and therefore $P_vD_vW(r,v)=o(1)$ uniformly in $v\in\cM$ as $r\rightarrow 0$. Choose $r_1\in(0,r_0)$ such that $\norm{P_vD_vW(r,v)}_{\cL(T_v\cM)}\leq \frac{1}{2}$ for every $(r,v)\in(0,r_1)\times\cM$.

Assume $D\varphi_r(v)=0$ for some $0<r<r_1$, $v\in\cM$. Using $P_v\circ P_D=0$ one sees that $\tilde{\psi}(r,v,W(r,v))=0$ implies 
\begin{equation}\label{eq:nabla_Phi_gleich_id_Pv_nabla_Phi}
(\id-P_v)P_Y\nabla \Phi_r(v+W(r,v))=0.
\end{equation}
Thus we obtain for $v'\in T_v\cM$
\begin{align}\label{eq:derivative_of_varphi}
\begin{split}
0=D\varphi_r(v)v'&=\ska{\nabla\Phi_r(v+W(r,v)),(\id+D_vW(r,v))v'}\\
&=\ska{P_Y\nabla\Phi_r(v+W(r,v)),P_Y(\id+D_vW(r,v))v'}\\
&=\ska{P_vP_Y\nabla\Phi_r(v+W(r,v)),(\id+P_vD_vW(r,v))v'}
\end{split}
\end{align}
and conclude $P_v\psi_r(v+W(r,v))=P_vP_Y\nabla\Phi_r(v+W(r,v))=0$, since the map $\id+P_vD_vW(r,v):T_v\cM\rightarrow T_v\cM$ is an isomorphism.
\end{proof}
Now it remains to investigate critical points of $\varphi_r$ for $r\in(0,r_1)$.
\begin{proof}[Proof of Theorem \ref{thm:2}]
Let $r\in(0,r_1)$. The reduced functional $\varphi_r$ is invariant with respect to the action of $\set{\theta\in\T^m:\theta_1=\ldots=\theta_m}$, which is smooth on $\cM$. So every critical point of $\varphi_r$ belongs to a whole orbit of critical points. If $l=1$, we are done. Otherwise we can find on each of the critical orbits a point of the form $(v^1,\ldots,v^{l-1},Z^l,0,\ldots,0)\in\cM$. Therefore the number of critical orbits is given by the number of critical points of $\T^{l-1}\rightarrow \R$,
$
\theta\mapsto\varphi_r\big((\theta_1,\ldots,\theta_{l-1},0,\ldots,0)*Z\big),
$
for which the Lusternik-Schnirelmann category of $\T^{l-1}$ provides $l$ as a minimal bound, see for example \cite{cornea_lusternik-schnirelmann_2003}.

This way we have found for every $r\in(0,r_1)$ $l$ distinct critical points of $\Phi_r$. Let $u=v+W(r,v)\in Y$ be one of them. Then $z(t)=ru(t/r^2)+\hat{\alpha}$ is by construction a $T(r)=\tau r^2=2\pi\ord(\sigma)r^2$-periodic solution of \eqref{eq:Ham_omega}, for which the properties of Theorem \ref{thm:2} hold.
\end{proof}
\section{\texorpdfstring{Additional information and the case $l=1$}{Additional information and the case l=1}}\label{sec:the_case_l_1}
For now we just continue our investigation with $l\in\set{1,\ldots,m}$ arbitrary. Higher order derivatives with respect to $z$ are written as $F'''$, $F^{(4)}$ and so on.
\begin{Lem}\label{lem:regularity_of_W}
Let $g\in\cC^k(\Omega\times\Omega,\R)$ with $k\geq 2$. The map $W:[0,r_0)\times\cM\rightarrow H^1_\tau$ is of class $\cC^{k-2}$. Furthermore if $k\geq 3$, $\partial_rW(0,v)\in D$ for any $v\in \cM$. 
\end{Lem}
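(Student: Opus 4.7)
The plan is to extract $\cC^{k-2}$ smoothness of $\psi$ (and hence of $W$ via the implicit function theorem) at $r=0$ by isolating the apparent singularity with a Taylor-integral trick. Because $\nabla F(0)=0$, the identity
$$
\frac{1}{r}\nabla F(ru)=\int_0^1\nabla^2 F(sru)\,u\,ds=:\Xi(r,u)
$$
defines a function that is $\cC^{k-2}$ jointly in $(r,u)$, since $\nabla^2 F\in\cC^{k-2}$. Using $\nabla\Phi_0\perp D$ together with the observation that $(\id-\Delta)^{-1}$ preserves time-averages and $D\cap Y$ consists of constant functions (so that $P_{D\cap Y}(\id-\Delta)^{-1}=P_{D\cap Y}$), I would rewrite $\psi_r$ for $r>0$ in the form
$$
\psi_r(u)=\nabla\Phi_0(u)-(\id-P_D)(\id-\Delta)^{-1}\bigl(r^2\Xi(r,u)\bigr)-P_{D\cap Y}\Xi(r,u),
$$
which is manifestly $\cC^{k-2}$ on $[0,r_0)\times(B_\rho(\cM)\cap Y)$. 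Since $v\mapsto P_v$ is smooth on the finite-dimensional $\cM$, the map $\tilde\psi$ inherits the same regularity, and invertibility of $T=D_w\tilde\psi(0,v,0)$ (established in Lemma \ref{lem:ift}) then yields $W\in\cC^{k-2}$; the case $k=2$ is already covered by Lemma \ref{lem:ift}.

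To identify $\partial_rW(0,v)$ for $k\geq 3$, I would differentiate $\tilde\psi(r,v,W(r,v))=0$ in $r$ at $r=0$ to obtain $\partial_rW(0,v)=-T^{-1}\partial_r\tilde\psi(0,v,0)$. In the formula above the first summand of $\psi_r(v)$ is $r$-independent and the second vanishes to second order in $r$, so only the last term contributes:
$$
\partial_r\psi_r(v)\big|_{r=0}=-P_{D\cap Y}\partial_r\Xi(0,v)\in D\cap Y.
$$
Since $T_v\cM$ is spanned by time derivatives of periodic functions, it is $H^1_\tau$-orthogonal to $D$, so $P_v$ kills every element of $D$ and $\partial_r\tilde\psi(0,v,0)=\partial_r\psi_r(v)|_{r=0}\in D\cap Y$.

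The final step is to observe that $T$ leaves the finite-dimensional subspace $D\cap Y$ invariant and is invertible there. For $\hat b\in D\cap Y\subset D$ one has $P_v\hat b=0$ and $\nabla^2\Phi_0(v)\hat b=0$ by translation invariance of $\Phi_0$, so $T\hat b=-P_{D\cap Y}\nabla^2F(0)\hat b\in D\cap Y$. The restriction $T|_{D\cap Y}$ has trivial kernel because $\ker D\psi_0(v)=T_v\cM$ by Lemma \ref{Lem:kernel_of_auxillary_map_Psi} and $T_v\cM\cap D=\{0\}$; being a self-map of a finite-dimensional space it is therefore invertible. Consequently $\partial_rW(0,v)\in D\cap Y\subset D$, as desired.

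I expect the main obstacle to be the first step: carrying out the bookkeeping so that the cancellation between the $1/r^2$ factor in $\psi_r$ and the double vanishing of $\nabla F(ru)$ at $r=0$ is recast as an explicit $\cC^{k-2}$ remainder, while verifying carefully how $P_D$, $P_{D\cap Y}$ and $(\id-\Delta)^{-1}$ compose on constant-valued functions; once that rewriting is in place, the implicit function theorem and the finite-dimensional linear algebra steps are routine.
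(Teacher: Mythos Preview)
Your proposal is correct and both steps go through, but the route differs from the paper's in two places worth noting.

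For the regularity step, the paper isolates the same quotient $\kappa(r,u)=\frac{1}{r}\nabla F(ru)$ but, instead of your integral representation $\Xi(r,u)=\int_0^1\nabla^2F(sru)\,u\,ds$, it computes $\partial_r^{k-2}\kappa$ directly via Leibniz' rule and Taylor-expands each $F^{(j+1)}(ru)$ to order $k-1-j$; a binomial cancellation kills all the negative powers of $r$, leaving $\frac{1}{k-1}F^{(k)}(0)[u]^{k-1}+o(1)$. Your Hadamard-type rewriting is shorter and makes the $\cC^{k-2}$ regularity immediate, at the mild cost of hiding the explicit leading term.

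For the identification of $\partial_rW(0,v)$, the paper differentiates the relations $(\id-P_v)P_Y\nabla\Phi_r(v+W(r,v))=0$ and $P_vW(r,v)=0$ directly, uses $\partial_r\nabla\Phi_0(v)=0$ and the self-adjointness fact $(\id-P_v)P_Y\nabla^2\Phi_0(v)=\nabla^2\Phi_0(v)$, and concludes $\partial_rW(0,v)\in\ker\nabla^2\Phi_0(v)\cap(T_v\cM)^\perp=D$ in one line. Your argument via the invariance of $D\cap Y$ under $T$ is also valid (injectivity of $T$ on all of $Y$ indeed forces the unique preimage to lie in $D\cap Y$), but it is a slightly longer linear-algebra detour where the paper exploits the Hessian structure more directly.
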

\begin{proof} Since $\cM\ni v\mapsto P_v\in\cL(X)$ is $\cC^\infty$ and since $W$ is implicitly defined, the regularity of $W$ is induced by $\psi$. With $g\in\cC^k$ we also have $F\in\cC^k$ and hence $\Phi\in\cC^k$. Then by the definition of $\psi$ in \ref{Lem:kernel_of_auxillary_map_Psi} one sees that $\psi$ is indeed of class $\cC^{k-2}$ provided
$\kappa:\cU\rightarrow L^2(\R/\tau\Z,\R^{2N})$,
$$
\kappa(r,u)=\begin{cases}
\frac{1}{r}\nabla F(ru),&r>0,\\
\nabla^2F(0)u,&r=0
\end{cases}
$$
is $\cC^{k-2}$. In order to proove this observe that $\kappa$ is $\cC^k$ as long as $r>0$. The continuity up to $r=0$ follows as in the proof of Lemma \ref{Lem:kernel_of_auxillary_map_Psi} from the fact that $F$ is $\cC^2$ and that $\nabla F(0)=0$. Also the partial dervivatives that include at least one differentiation of $\kappa$ with respect to $u$ are easily seen to extend in a continuous way as $r\rightarrow 0$. So we have to look at the partial derivative 
\begin{align*}
\partial^{k-2}_r\kappa(r,u)=\sum_{j=0}^{k-2}\frac{(k-2)!}{j!}(-1)^{k-j}\frac{1}{r^{k-1-j}}F^{(j+1)}(ru)[u]^j,
\end{align*}
where $(r,u)\in\cU$ with $r>0$. Now a (pointwise) expansion of $F^{(j+1)}$ gives
$$
F^{(j+1)}(ru)[u]^j=\sum_{l=0}^{k-2-j}\frac{r^l}{l!}F^{(j+1+l)}(0)[u]^{j+l}+\frac{r^{k-1-j}}{(k-1-j)!}F^{(k)}(\xi u)[u]^{k-1}
$$ 
for some $\xi=\xi(j,u,t)\in(0,r)$. But as $r\rightarrow 0$ we obtain for the remainder 
$$
F^{(k)}(\xi u)[u]^{k-1}=F^{(k)}(0)[u]^{k-1}+o(1)
$$ with respect to $\norm{\cdot}_{L^2_\tau}$ and uniformly in $u\in B_\rho(\cM)$. Thus
\begin{align*}
\partial^{k-2}_r\kappa(r,u)&=\sum_{j=0}^{k-2}\sum_{l=0}^{k-2-j}\frac{(k-2)!(-1)^{k-j}}{j!l!}\frac{1}{r^{k-1-l-j}}F^{(j+1+l)}(0)[u]^{j+l}\\
&\hspace{40pt}+\sum_{j=0}^{k-2}\frac{(k-2)!(-1)^{k-j}}{j!(k-1-j)!}F^{(k)}(0)[u]^{k-1}+o(1)
\\&=\sum_{n=0}^{k-2}\left(\frac{(k-2)!(-1)^k}{n!r^{k-1-n}}F^{(n+1)}(0)[u]^n\sum_{j=0}^n\frac{n!(-1)^j}{j!(n-j)!}\right)
\\&\hspace{40pt}+F^{(k)}(0)[u]^{k-1}\int_0^1(1-s)^{k-2}\:ds +o(1)
\\&=\frac{1}{k-1}F^{(k)}(0)[u]^{k-1}+o(1).
\end{align*}
So the partial derivatives $\partial_r^{j}\kappa$, $j=1,\ldots,k-2$ exist and are continuous on all of $\cU$.

For the second part assume that $g\in\cC^3$. Now $W$ is $\cC^1$ on all of $[0,r_0)\times\cM$ and we know by Lemma \ref{lem:ift} that
$$
(\id-P_v)P_Y\nabla\Phi_r(v+W(r,v))=0,\quad P_vW(r,v)=0
$$
for $r>0$ small, cf. equation \eqref{eq:nabla_Phi_gleich_id_Pv_nabla_Phi}.
Differentiation of both equations with respect to $r$ at $r=0$ and the use of $\partial_r\nabla\Phi_0(v)=0$ as well as $(\id-P_v)P_Y\nabla^2\Phi_0(v)=\nabla^2\Phi_0(v)$ shows 
$$
\partial_rW(0,v)\in \ker\nabla^2\Phi_0(v)\cap (T_v\cM)^\perp=D.
$$
\end{proof}
\begin{proof}[Proof of Theorem \ref{thm:case_m1}]
Let now $l=1$. In that case the reduced map $\varphi_r$ is in fact constant. Hence the demanded solutions of $\nabla\Phi_r(u)=0$ can be parameterized by $u:[0,r_1)\rightarrow H^1_\tau$, $r\mapsto u^{(r)}=Z+W(r,Z)$, where $r_1>0$ is taken from Lemma \ref{lem:critical_points_of_varphi_imply_} and $Z=(Z^1,0\ldots,0)\in\cM$. By \ref{lem:regularity_of_W} this parametrization is indeed $\cC^{k-2}$ provided $g\in\cC^k$, $k\geq 2$ and $\partial_ru^{(0)}\in D$ when $k\geq 3$.
\end{proof}
\begin{Rem}\label{rem:A4}
For the case $l>1$ a corresponding result would be true provided one knows that $\varphi_r$ for every $r>0$ small is a Morse function. This would not only imply that the solution set of $\nabla\Phi_r(u)=0$ close to $\{0\}\times\cM$ is a union of graphs but also increase for fixed $r>0$ the number of existing solutions to $2^{l-1}$, which is the bound given by Morse theory. A fourth-order expansion of $\varphi_r$ in $\cC^2(\cM,\R)$, which we don't carry out in detail, shows that the improvements would hold provided
 $f:\cM\rightarrow\R$,
$$
f(v)=\int_0^\tau F^{(4)}(0)[v]^4+6\ska{\nabla^2F(0)v,\partial_r^2W(0,v)}_{\R^{2N}}\:dt
$$
has up to synchronous time shifts only nondegenerate critical points. But this condition has so far not been verified for specific examples.
\end{Rem}

\section{An explicit example}\label{sec:example}
With Examples \ref{ex:relative_equilibria} and \ref{ex:relative_equilibrium_2} we have already seen some relative equilibrium solutions that are $\sigma$-nondegenerate or just nondegenerate and therefore can be choosen in (A3$^\prime$) for theorem \ref{thm:2}. Independent of the relative equilibrium solutions we also need for (A1$^\prime$) a nondegenerate or not too degenerate critical point of the $m$-vortex Hamiltonian $\cH$. We will verify this for Example \ref{Ex:unit_disc}. I.e. we look at the $2$-vortex system in the unit disc $\Omega=B_1(0)$ with vorticities $\Gamma^1=1$, $\Gamma^2=-1$. By combining for example a Thomson $N_1$-Gon configuration with vorticities $\Gamma^1_j=\frac{1}{N_1}$, $j=1,\ldots,N_1$ and a collinear configuration of $N_2$ vortices of strengths $\Gamma^2_j=-\frac{1}{N_2}$, $j=1,\ldots,N_2$ or another Thomson configuration we obtain therefore periodic solutions of \eqref{eq:Ham_omega} in the unit disc for an arbitrary number of $N=N_1+N_2\geq 3$ vortices that are not rigidly rotating around the center of the disc.

The regular part of the Dirichlet Green's function in $B_1(0)$ is given by
$$
g(x,y)=g_{B_1(0)}(x,y)=-\frac{1}{4\pi}\log\left(\abs{x}^2\abs{y}^2-2\ska{x,y}_{\R^2}+1\right)
$$
and 
$$
h(x)=h_{B_1(0)}(x)=-\frac{1}{2\pi}\log(1-\abs{x}^2),
$$
such that the Hamiltonian defined on $\cF_2(B_1(0))$ is given by
\begin{align*}
\cH(a^1,a^2)&=\frac{1}{\pi}\left(\log\abs{a^1-a^2}-\frac{1}{2}\log\left(\abs{a^1}^2\abs{a^2}^2-2\ska{a^1,a^2}_{\R^2}+1\right)\right)\\
&\hspace{70pt}+\frac{1}{2\pi}\left(\log\big(1-\abs{a^1}^2\big)+\log\big(1-\abs{a^2}^2\big)\right).
\end{align*}
Let $R(y)=\frac{y}{\abs{y}^2}$ be the reflection at the unit circle, then
\begin{align*}
\pi\nabla_1\cH(a^1,a^2)&=\frac{a^1-a^2}{\abs{a^1-a^2}^2}-\frac{a^1-R(a^2)}{\abs{a^1-R(a^2)}^2}-\frac{a^1}{1-\abs{a^1}^2},\\
\pi\nabla_2\cH(a^1,a^2)&=\frac{a^2-a^1}{\abs{a^2-a^1}^2}-\frac{a^2-R(a^1)}{\abs{a^2-R(a^1)}^2}-\frac{a^2}{1-\abs{a^2}^2}.
\end{align*}
The ansatz $\alpha^1=(\mu,0)$, $\alpha^2=(-\mu,0)$ with $\mu>0$ shows that $\alpha=(\alpha^1,\alpha^2)$ is a critical point of $\cH$ if and only if
\begin{equation}\label{eq:relation_of_mu}
\mu^4=1-4\mu^2,
\end{equation}
which means $\mu=\sqrt{\sqrt{5}-2}$. For the second derivatives at the critical point $\alpha=(\mu,0,-\mu,0)$ we get with a repeated use of \eqref{eq:relation_of_mu}
\begin{align*}
\pi\nabla_1^2\cH(\alpha)&=\left(\frac{1}{4\mu^2}-\frac{1}{(\mu+\frac{1}{\mu})^2}\right)\begin{pmatrix}
-1 & 0 \\
0 & 1
\end{pmatrix}-\frac{1}{(1-\mu)^2}\begin{pmatrix}
1+\mu^2 & 0 \\
0 & 1-\mu^2
\end{pmatrix}\\
&=\frac{1}{26\mu^2-6}\begin{pmatrix}
-6\mu^2+1 & 0 \\
0 & 4\mu^2-1
\end{pmatrix},
\end{align*}
\begin{align*}
\pi\nabla_2\nabla_1\cH(\alpha)&=\frac{1}{4\mu^2}\begin{pmatrix}
-1 & 0 \\
0 & 1
\end{pmatrix}+\frac{1}{(1+\mu^2)^2}\begin{pmatrix}
1 & 0 \\
0 & 1
\end{pmatrix},\\
&=\frac{1}{20\mu^2-4}\begin{pmatrix}
\mu^2+1 & 0\\
0 &3\mu^2-1
\end{pmatrix}
\end{align*}
and $\nabla^2_2\cH(\alpha)=\nabla^2_1\cH(\alpha)$, $\nabla_1\nabla_2\cH(\alpha)=\nabla_2\nabla_1\cH(\alpha)$. So the Hessian of $\cH$ is given by
$$
\pi\nabla^2\cH(\alpha)=\begin{pmatrix}
\frac{-6\mu^2+1}{26\mu^2-6} & 0 & \frac{\mu^2+1}{20\mu^2-4} & 0 \\
0 & \frac{4\mu^2-1}{26\mu^2-6} & 0 & \frac{3\mu^2-1}{20\mu^2-4}\\
 \frac{\mu^2+1}{20\mu^2-4} & 0 &\frac{-6\mu^2+1}{26\mu^2-6} & 0\\
0& \frac{3\mu^2-1}{20\mu^2-4}& 0 & \frac{4\mu^2-1}{26\mu^2-6}  
 \end{pmatrix}.
$$
Using \eqref{eq:relation_of_mu} one can verify that the second and the fourth column are identical. This corresponds to the degeneracy induced by the rotational invariance, which means $J_2\alpha=(0,-\mu,0,\mu)\in\ker\nabla^2\cH(\alpha)$. On the other hand one easily sees that the first three columns are linearly independent. This shows that $\alpha$ is a critical point of the $2$ vortex Hamiltonian $\cH$ satisfying condition (A2$^\prime$)(ii) as it has been stated in Example \ref{Ex:unit_disc}.

\vspace{20pt}
\textbf{Acknowledgements.} I would like to thank Prof. Thomas Bartsch not only for his helpful comments and questions regarding this article but also for his whole support during my PhD studies in Gie\ss en.

\printbibliography
\vspace{40pt}
\noindent Bj\"orn Gebhard\\
 Mathematisches Institut\\
 Universit\"at Gie\ss en\\
 Arndtstr.\ 2\\
 35392 Gie\ss en\\
 Germany\\
 Bjoern.Gebhard@math.uni-giessen.de
\end{document}